\pgfplotsset{/pgf/number format/use comma,compat=newest}
\newtheorem{theorem}{Theorem}[section]
\newtheorem{lemma}[theorem]{Lemma}
\newtheorem{proposition}[theorem]{Proposition}
\newtheorem{definition}[theorem]{Definition}
\newtheorem{example}[theorem]{Example}
\newtheorem{remark}[theorem]{Remark}
\numberwithin{equation}{section}
\newcommand\NN{\mathbb{{N}}}
\newcommand\RR{\mathbb{{R}}}
\newcommand\CC{\mathbb{{C}}}
\newcommand\ZZ{\mathbb{{Z}}}
\DeclarePairedDelimiter{\abs}{\lvert}{\rvert}
\DeclarePairedDelimiter{\norma}{\lVert}{\rVert}
\newcommand{\smoother}[3]{
V_{#1, \text{#2}}^{#3}
}
\title{Multigrid methods: grid transfer operators and subdivision schemes}
\author{M. Charina, M. Donatelli, L. Romani and V. Turati}
\date{}
\newcommand{\Addresses}{{
\footnotesize

M. Charina, \textsc{Fakult\"at f\"ur Mathematik}, \emph{Universit\"at Wien}, Oskar-Morgenstern-Platz 1, 1090 Wien (Austria), \textit{maria.charina@univie.ac.at}

\smallskip 
 
M. Donatelli, \textsc{Dipartimento di Scienza e Alta Tecnologia}, \emph{Universit\`a dell'Insubria}, via Valleggio 11, 22100 Como (Italia), \textit{marco.donatelli@uninsubria.it}

\smallskip
  
L. Romani, \textsc{Dipartimento di Matematica e Applicazioni}, \emph{Universit\`a di Milano-Bicocca}, via Cozzi 53, 20125 Milano (Italia), \textit{lucia.romani@unimib.it}

\smallskip

V. Turati, \textsc{Dipartimento di Scienza e Alta Tecnologia}, \emph{Universit\`a dell'Insubria}, via Valleggio 11, 22100 Como (Italia), \textit{vturati@studenti.uninsubria.it}

}}
\begin{document}

\maketitle

\begin{abstract} 
The convergence rate of a multigrid method depends on the properties of the smoother and the so-called grid 
transfer operator. In this paper we define and analyze new grid transfer operators with a generic 
cutting size which are applicable for high order problems. We enlarge the class of available geometric grid 
transfer operators by relating the symbol analysis of the coarse grid correction with the approximation 
properties of univariate subdivision schemes.  We show that the polynomial generation property and stability 
of a subdivision scheme are crucial for convergence and optimality of the corresponding multigrid method. 
We construct a new class of grid transfer operators from primal binary and ternary pseudo-spline symbols. 
Our numerical results illustrate the behavior of the new grid transfer operators. 
\end{abstract}

\Addresses


\section{Introduction}
Multigrid methods are used for solving linear systems of equations 
\begin{equation}\label{eq:sys}
A_n  \mathbf{x} = \mathbf{b}_n, \quad \mathbf{x} \in \CC^n,
\end{equation} 
with symmetric and positive definite system matrices $A_n \in \CC^{n \times n}$ and $\mathbf{b}_n \in \CC^n$, $n \in \NN$.
A basic two-grid method combines the action of a smoother and a coarse grid correction: the smoother is often a simple iterative method such as Gauss-Seidel; the coarse grid correction amounts to solving the residual equation exactly on a coarser grid. A V-cycle multigrid method solves the residual equation approximately within the
recursive application of the two-grid method, until the coarsest level is reached and there the resulting small 
system of equations is solved exactly \cite{Brandt,mgm-book}.

The choice of the grid transfer operator is crucial for the definition of an effective multigrid method and
becomes cumbersome especially for high order problems or on complex domains. Several algebraic multigrid 
methods have been designed to overcome these difficulties \cite{Notay, RS1987, Vanek}. Simple geometric grid transfer operators are of interest for both geometric and algebraic multigrid methods due to their simplicity and 
applicability for re-discretizations of the problem at coarser levels. The common choice for the grid transfer 
operators are interpolation operators \cite{mgm-book}. We show that a variety of new interpolating and approximating schemes, developed to design curves and surfaces via subdivision, can be successfully used as grid transfer operators 
for multigrid methods.

The main contribution of this paper are the sufficient conditions on the symbol of a univariate subdivision scheme
of general arity that guarantee that the corresponding grid transfer operator leads to an optimal multigrid method. 
A stationary iterative method is called optimal whenever its convergence rate is linear and the computational cost of each iteration is proportional to the cost of a matrix vector product. To the best of our knowledge, a hint on the possible link between multigrid methods and subdivision schemes can be only found in \cite{fluid}. 
However, \cite{fluid} only presents a special multigrid method with a structure similar to cascadic multigrid 
\cite{cascadic} without any theoretical analysis.

To clarify the link between multigrid and subdivision, we start by recalling that local Fourier analysis (LFA) \cite{Brandt} is a classical 
tool for the convergence analysis of multigrid methods with applications in partial differential equations (PDEs).
In \cite{NLL2010}, it has been proved that the multigrid analysis based on the symbol of circulant or tau matrix
\footnote{The circulant and tau matrices are algebra of matrices diagonalized by the Fourier and the sine matrix, respectively, such that the eigenvalues are a uniform sampling, in a reference interval, of a specific function called symbol.} is an extension of the LFA to linear systems arising from problems that are not necessarily defined by PDEs. 
In the case of PDEs, the symbol that is analyzed in \cite{NLL2010}, and proposed for the first time in \cite{FS} for tau matrices, does not depend on the discretization step and the information on the order of the problem 
is retrieved from the order of the zero of the symbol (see \cite{NLL2010} for details). Similarly, the properties of a subdivision scheme are encoded into the associated Laurent polynomial, analogue of the symbol of a circulant matrix. Due to this analogy, in this paper, we recast the optimality conditions on the grid transfer operator of a multigrid method in terms of certain approximation properties of the corresponding subdivision scheme. In particular,
we slightly relax usual sufficient conditions for convergence and optimality of two-grid methods and translate them into subdivision language (we require polynomial generation property).
For the V-cycle method, for a cutting size larger than two, we first derive sufficient conditions that imply the approximation condition introduced in \cite{RS1987}. Our result 
generalizes the two-grid analysis in \cite{BIT2011}. 
These new sufficient conditions allow us then to obtain the optimality of the V-cycle method under the assumption of $\ell^{\infty}$-stability or under the Cohen's condition on the subdivision symbol. We apply our results to binary and ternary primal pseudo-splines and show that  their symbols define appropriate grid transfer operators which satisfy the above mentioned optimality conditions. Finally, our numerical experiments show the effectiveness of the new grid transfer operators based on binary or ternary pseudo-splines. We apply the corresponding multigrid methods for solving high order problems and for solving linear systems derived via isogeometric approach 
\cite{Don_Gar_Mann_Serra_Spel2,Don_Gar_Mann_Serra_Spel}.

Even if the theoretical analysis of multigrid is done in the case of circulant matrices, the resulting multigrid methods are applicable for solving more general linear systems of equations, in particular, those with Toeplitz system matrices. Indeed, it is well-known that Toeplitz matrices are also defined by means of symbols and that they are well approximated by circulant matrices \cite{Bottcher,ToepReview}. 

In order to keep the presentation simple, in this paper, we discuss only one dimensional problems and univariate subdivision schemes.  This allows for the first, transparent and straightforward exposition of the links between the symbol analysis for multigrid methods and symbols of subdivision schemes. Our results can be extended in many directions. In particular, the study of multivariate anisotropic problems as well as of problems of fluid dynamics, connection between multigrid and dual subdivision schemes - these are of future interest.

The paper is organized as follows.  In section \ref{sec:notation}, we recall basic properties of univariate 
subdivision schemes and multigrid methods. The emphasis is on the multigrid convergence analysis  based 
on the study of symbols of certain circulant matrices. In section \ref{sec:conv}, we provide new sufficient conditions  for convergence and optimality of multigrid methods with a cutting size larger than two. There, we also investigate the properties of subdivision schemes of general arity that are crucial for defining grid transfer operators of optimal multigrid methods.
In section \ref{sec:splines}, we recall the definition and properties of primal binary and ternary pseudo-splines and show that 
their symbols satisfy conditions stated in section \ref{sec:conv}. In section \ref{sec:num_examples},
we illustrate the properties of the new grid transfer operators based on primal pseudo-splines with
numerical experiments. We summarize our results and outline possible future research directions in Section \ref{sec:concl}.

\section{Background and notation}\label{sec:notation}

In this section, in subsection \ref{sec:subdivision}, we introduce the basic facts about univariate subdivision schemes. Then in
subsection \ref{sec:multigrid_generic}, we give a short overview of basic facts about multigrid methods that already hint at a possible
link between multigrid and subdivision, see Remark \ref{rem:connection_to_subdivision}.

\subsection{Univariate subdivision} \label{sec:subdivision}

Let $g \in \NN, \, g \geq 2$ and $\mathbf{p} = \{ \mathrm{p}_{\alpha} \in \RR \, : \, \alpha \in \ZZ \} \in \ell_0 (\ZZ)$ a finite sequence of real numbers.
The arity $g$ and the \emph{mask} $\mathbf{p}$ are used to define the subdivision operator $S_{\mathbf{p}} \colon \ell(\ZZ) \to \ell(\ZZ)$, which is a linear operator such that
\[
\left ( S_{\mathbf{p}} \mathbf{c} \right )_{\alpha} := \sum_{\beta \in \ZZ} \mathrm{p}_{\alpha - g \beta} \mathrm{c}_{\beta}, \quad \alpha \in \ZZ, \quad \forall \, \mathbf{c} \in \ell (\ZZ).
\]
A subdivision scheme $S_{\mathbf{p}}$ of arity $g$ associated with the mask $\mathbf{p}$ is the recursive application of the subdivision operator $S_{\mathbf{p}}$ to an initial data sequence
$\mathbf{c}^{(0)} = \{ \mathrm{c}_{\alpha}^{(0)} \in \RR \, : \, \alpha \in \ZZ \} \in \ell(\ZZ)$, namely
\begin{equation}
 \label{eq:subdivision}
 \mathbf{c}^{(k+1)} := S_{\mathbf{p}} \mathbf{c}^{(k)}, \quad k \in \NN_0.
\end{equation}
Notice that $\mathbf{c}^{(k+1)} = S_{\mathbf{p}} \mathbf{c}^{(k)} = \cdots = (S_{\mathbf{p}})^{k+1} \mathbf{c}^{(0)}$.

We recall that $\ell^{\infty}(\ZZ) \subset \ell(\ZZ)$ is the Banach space of bounded sequences $\mathbf{c}$ with the norm
\[
 \norma{\mathbf{c}}_{\infty} := \sup_{\alpha \in \ZZ} \, \abs{\mathrm{c}_{\alpha}}.
\]

\begin{definition}
A subdivision scheme $S_{\mathbf{p}}$ is \emph{convergent} if for any initial data $\mathbf{c} \in \ell^{\infty} (\ZZ)$ there exists a uniformly continuous function
$F_{\mathbf{c}} \in C(\RR)$ such that
\[
\lim_{k \to \infty} \, \sup_{\alpha \in \ZZ} \quad \abs*{\, F_{\mathbf{c}} \left ( \frac{\alpha}{g^k} \right ) - \left ( S_{\mathbf{p}}^k \mathbf{c} \right )_{\alpha} \, } = 0.
\]
\end{definition}

The particular choice of the initial data $\bm{\delta} = \Set{\delta_{\alpha,0} \, : \, \alpha \in \ZZ} = \Set{\ldots, 0, 0, 1, 0, 0, \ldots}$
defines the so-called \emph{basic limit function} $\phi = F_{\bm{\delta}}$. Notice that $\phi$ is compactly supported since the mask
$\mathbf{p} \in \ell_0 (\ZZ)$ is a finite sequence. It is well-known that the basic limit function $\phi$ satisfies the refinement equation
\begin{equation} \label{eq:ref_eq}
\phi (t) = \sum_{\alpha \in \ZZ} \mathrm{p}_{\alpha} \phi (gt - \alpha), \quad t \in \RR.
\end{equation}
Thus, due to the linearity of $S_{\mathbf{p}}$, for any initial data $\mathbf{c} \in \ell(\ZZ)$, $ \mathbf{c} =\displaystyle \sum_{\alpha \in \ZZ} \mathrm{c}_\alpha \bm{\delta}(\cdot - \alpha)$, it holds
\[
F_{\mathbf{c}} = \lim_{k \to \infty} S_p^k \mathbf{c} = \sum_{\alpha \in \ZZ} \mathrm{c}_{\alpha} \phi(\cdot - \alpha).
\]
For more details on subdivision, see the seminal work of Cavaretta et al. \cite{Cava_Dam_Micc} and the survey by Dyn and Levin \cite{Dyn_Lev}.

The Laurent polynomial
\[
p(z) = \sum_{\alpha \in \ZZ} \mathrm{p}_{\alpha} z^{\alpha}, \qquad z \in \CC \setminus \set{0},
\]
is the \emph{symbol} of the subdivision scheme $S_{\mathbf{p}}$. To establish the link between grid transfer operators and
subdivision schemes, we only consider subdivision schemes whose mask
$\mathbf{p}$ is odd symmetric, namely $\mathrm{p}_{-\alpha} = \mathrm{p}_{\alpha}, \, \alpha \in \ZZ$. In terms of symbols, it reads $p(z) = p(z^{-1})$.
Thus, the associated symbol is of the form
\begin{equation} \label{eq:symbol_trigo_poly}
p(z) = \mathrm{p}_0 + \sum_{\alpha \in \NN} \mathrm{p}_{\alpha} \left (z^{-\alpha} + z^{\alpha} \right), \qquad z \in \CC \setminus \set{0}.
\end{equation}
Notice that for $z = e^{- \mathrm{i} x}, \, x \in \RR$, the symbol $p(z)$ becomes a real-valued trigonometric polynomial.

We next define some properties of subdivision schemes which we use for the analysis of convergence and optimality of multigrid methods.
We first introduce the concept of $\ell^{\infty}$-stability.

\begin{definition}
Let $f \in L^{\infty} (\RR)$ be compactly supported. We say that $f$ is $\ell^{\infty}$-stable  if there exist constants $0 < A \leq B  < \infty$ such that
\begin{equation} \label{eq:def_stability}
A \norma*{\mathbf{c}}_{\infty} \leq \norma*{\sum_{\alpha \in \ZZ} \mathrm{c}_{\alpha} f(\cdot - \alpha) }_{L^{\infty} ( \RR )} \leq B \norma*{\mathbf{c}}_{\infty}, \qquad \forall \, \mathbf{c} \in \ell^{\infty} ( \ZZ).
\end{equation}
\end{definition}
We also define the polynomial generation property of $S_{\mathbf{p}}$. This property plays a fundamental role in our analysis of multigrid.
We denote by $\Pi_d$ the space of polynomials of degree $d \in \NN_0$.

\begin{definition} \label{d:poly_generation}
A convergent subdivision scheme $S_{\mathbf{p}}$ \emph{generates} polynomials up to degree $d$ if
\[
\forall \, \mathbf{c} = \Set{ \mathrm{c}_{\alpha} = \pi(\alpha) \, : \, \alpha \in \ZZ}, \quad \, \pi \in \Pi_{d}, \quad \sum_{\alpha \in \ZZ} \mathrm{c}_\alpha \phi(\cdot - \alpha) = \tilde{\pi} \in \Pi_{d}.
\]
\end{definition}
The property of polynomial generation has been studied, e.g., by Cavaretta et al. in \cite{Cava_Dam_Micc} or by Levin in \cite{Levin_gen_non_unif}. One of the results in \cite{Levin_gen_non_unif}
states that the limit polynomial $\tilde{\pi} \in \Pi_{d}$ has the same leading coefficient as $\pi \in \Pi_{d}$.
Cavaretta et al. also showed that for a binary ($g=2$) subdivision scheme polynomial generation is characterized in terms of its symbol.
Their result is generalized for arbitrary arity $g$, e.g., in \cite{Levin_gen_non_unif}.

We denote the set of the $g$-th roots of unity by 
\[
E_g := \Set{e^{- \mathrm{i} \frac{2 \pi j}{g}} \, : \, j=0, \dots, g-1}.
\]

\begin{theorem}[\cite{Cava_Dam_Micc, Levin_gen_non_unif}]\label{t:poly_generation}
A convergent subdivision scheme $S_{\mathbf{p}}$ generates polynomials up to degree $d$ if and only if
\begin{equation} \label{conditions:poly_generation}
 D^j p (\varepsilon) = 0 \quad \text{for} \quad \varepsilon \in E_g \setminus \set{1}, \quad j = 0, \dots, d.
\end{equation}
\end{theorem}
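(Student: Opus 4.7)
My plan is to reduce the statement to a one-step condition on $S_{\mathbf{p}}$, and then translate that condition into the vanishing of derivatives of $p$ at the non-trivial $g$-th roots of unity via a Fourier computation on sequences.

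\textbf{Step 1 (Reformulation as a one-step property).} I first show that $S_{\mathbf{p}}$ generates polynomials up to degree $d$ (Definition~\ref{d:poly_generation}) if and only if, for every $\pi\in\Pi_d$, there exists $\tilde\pi\in\Pi_d$ such that $(S_{\mathbf{p}}\,\pi|_\ZZ)_\alpha=\tilde\pi(\alpha/g)$ for all $\alpha\in\ZZ$. The direction $(\Rightarrow)$ follows by applying the refinement equation~\eqref{eq:ref_eq} to $\sum_\alpha\pi(\alpha)\phi(\cdot-\alpha)=\tilde\pi$ and reading off the coefficients against the shifted copies of $\phi$ on the refined grid. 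The direction $(\Leftarrow)$ is obtained by iterating: after $k$ subdivision steps, the data is the sampling at $g^{-k}\ZZ$ of a polynomial $\pi^{(k)}\in\Pi_d$; passing to the limit using convergence of the scheme, the uniform limit of polynomials of bounded degree stays in the finite-dimensional space $\Pi_d$.

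\textbf{Step 2 (Symbol identity).} For exponential data $c_\alpha=e^{-ix\alpha}$, inserting the discrete orthogonality $\mathbf{1}_{\gamma\equiv\alpha\,(\mathrm{mod}\,g)}=g^{-1}\sum_{\varepsilon\in E_g}\varepsilon^{\gamma-\alpha}$ into the definition of $S_{\mathbf{p}}$ yields
\[
(S_{\mathbf{p}}\mathbf{c})_\alpha=\frac{1}{g}\,e^{-ix\alpha/g}\sum_{\varepsilon\in E_g}\varepsilon^{-\alpha}\,p\bigl(\varepsilon\,e^{ix/g}\bigr).
\]
Since $\alpha^k=i^k\,\partial_x^k|_{x=0}\,e^{-ix\alpha}$, applying $i^k\,\partial_x^k|_{x=0}$ to both sides and expanding by Leibniz (together with Faà di Bruno on the composition $x\mapsto p(\varepsilon e^{ix/g})$) produces
\[
\bigl(S_{\mathbf{p}}(\alpha^k)\bigr)_\alpha=\sum_{\varepsilon\in E_g}\varepsilon^{-\alpha}\,Q_{\varepsilon,k}(\alpha),
\]
where each $Q_{\varepsilon,k}$ is a polynomial in $\alpha$ of degree at most $k$ whose coefficients depend linearly on $D^mp(\varepsilon)$ for $0\le m\le k$, arranged triangularly in $m$ with an invertible leading block (the top coefficient involves $D^mp(\varepsilon)$ with a nonzero constant).

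\textbf{Step 3 (Conclusion).} The sequences $(\varepsilon^{-\alpha})_{\alpha\in\ZZ}$, $\varepsilon\in E_g$, are linearly independent over the ring of polynomials in $\alpha$, by a Vandermonde argument applied to their successive highest-degree coefficients. Hence $(S_{\mathbf{p}}(\alpha^k))_\alpha$ coincides with a polynomial in $\alpha$ if and only if $Q_{\varepsilon,k}\equiv 0$ for every $\varepsilon\in E_g\setminus\{1\}$, and by the triangular structure this is equivalent to $D^mp(\varepsilon)=0$ for all $m=0,\ldots,k$ and every $\varepsilon\in E_g\setminus\{1\}$. Letting $k$ range from $0$ to $d$ and invoking the reformulation of Step~1 completes the proof.

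The main obstacle will be Step~1: polynomial data is not bounded, so the convergence definition does not apply verbatim. This is handled by exploiting compact support of $\phi$ (so that $\sum_\alpha\pi(\alpha)\phi(t-\alpha)$ is locally a finite sum and defines a continuous function), together with degree-control under iteration, which keeps every intermediate limit inside $\Pi_d$ and allows a clean passage to the limit on any bounded interval.
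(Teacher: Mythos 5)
The paper offers no proof of Theorem~\ref{t:poly_generation}; it is imported verbatim from \cite{Cava_Dam_Micc, Levin_gen_non_unif}, so your proposal can only be judged on its own merits. Your Steps~2 and~3 are correct and are essentially the classical computation: the identity $(S_{\mathbf{p}}\mathbf{c})_\alpha=\frac{1}{g}e^{-ix\alpha/g}\sum_{\varepsilon\in E_g}\varepsilon^{-\alpha}p(\varepsilon e^{ix/g})$ for exponential data checks out, the triangular dependence of the coefficients of $Q_{\varepsilon,k}$ on $D^0p(\varepsilon),\dots,D^kp(\varepsilon)$ with nonzero diagonal is right, and the linear independence of $(\varepsilon^{-\alpha})_{\alpha}$ over polynomial coefficients is standard. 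The backward implication of Step~1 (one-step property $\Rightarrow$ generation) is also sound once you carry out the localization you announce, since $\Pi_d$ is finite-dimensional and sup-norm convergence of the samples on refined grids inside a fixed interval forces convergence of the polynomials $\pi^{(k)}$ in $\Pi_d$.

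The genuine gap is the forward implication of Step~1, which is exactly the half needed to prove that generation \emph{implies} the symbol conditions. From \eqref{eq:ref_eq} you obtain $\tilde\pi(t)=\sum_\gamma(S_{\mathbf{p}}\pi|_{\ZZ})_\gamma\,\phi(gt-\gamma)$, and you want to ``read off'' that $(S_{\mathbf{p}}\pi|_{\ZZ})_\gamma$ is the unique polynomial sequence representing $\tilde\pi(\cdot/g)$ in the shifts of $\phi$. That step silently assumes injectivity of the synthesis map $\mathbf{a}\mapsto\sum_\alpha \mathrm{a}_\alpha\phi(\cdot-\alpha)$ on the relevant class of sequences, i.e.\ (local) linear independence of the integer translates of $\phi$. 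Convergence of $S_{\mathbf{p}}$ does not give you this: there are convergent schemes whose basic limit function has linearly dependent translates (e.g.\ stretched B-splines), and in general the kernel of the synthesis map is spanned by exponential-polynomial sequences $\alpha^j\theta^\alpha$ attached to common zeros of $\hat\phi$ on cosets $x_0+2\pi\ZZ$. Since by your own Step~2 the sequence $(S_{\mathbf{p}}\pi|_{\ZZ})_\gamma$ differs from a polynomial precisely by exponential-polynomial terms with frequencies in $E_g\setminus\{1\}$, you must rule out that such terms lie in this kernel; that requires an argument (e.g.\ showing $\hat\phi$ cannot vanish identically on a coset $\frac{2\pi j}{g}+2\pi\ZZ$ for a convergent scheme with $\hat\phi(0)\neq 0$, or invoking $\ell^\infty$-stability as an extra hypothesis, or proving necessity by a different route via difference schemes as in \cite{Cava_Dam_Micc}). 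As written, the ``only if'' direction of the theorem is not established.
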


Thus, polynomial generation is guaranteed by the fact that the symbol $p(z)$ and its derivatives vanish at all the $g$-th roots of unity except at 1.
Finally, the condition in \eqref{conditions:poly_generation} is equivalent to requiring  that the symbol $p(z)$ has the following structure
\begin{equation} \label{eq:symbol_factorization_generation}
p(z) = \left (1+z+z^2+\dots+z^{g-1} \right )^{d + 1} \, b(z),
\end{equation}
for some Laurent polynomial $b(z)$. One also requires that $b(1) = g^{-d}$, i.e. $p(1)=g$. The reason for that is that the zero conditions of order $d+1$
$$
p(1) = g \qquad \text{and} \qquad  D^j p (\varepsilon) = 0 \quad \text{for} \quad \varepsilon \in E_g \setminus \set{1}, \quad j = 0, \dots, d,
$$
are necessary for convergence and regularity of subdivision, see e.g. \cite{Cava_Dam_Micc}.


\subsection{Multigrid methods for symmetric positive definite system matrices} \label{sec:multigrid_generic}

\vspace{0.3cm} Let $n \in \NN$ be a positive integer. To design two-grid and multigrid iterative methods for solving linear systems of the form \eqref{eq:sys}, we define 
\begin{itemize}
\item $N \in \NN$, $N < n$ the dimension of coarse space at which we project our problem,
\item the grid transfer operator $P_n \in \CC^{n \times N}$, $\hbox{rank}(P_n)=N$, and
\item a class ${\cal V}(\cdot)$ of iterative methods of the form
\begin{equation} \label{eq:iter_methods}
\begin{split}
{\cal V} ( \mathbf{x}_n^{(k)} ) := \mathbf{x}_n^{(k+1)} = V_n \mathbf{x}_n^{(k)} + \mathbf{\tilde{b}}_n, \qquad k \in \NN_0, \\
V_n = I_n - W_n^{-1} A_n, \qquad \mathbf{\tilde{b}}_n = W_n^{-1} \mathbf{b}_n \in \CC^n,
\end{split}
\end{equation}
where $W_n$ is a nonsingular matrix such that $W_n=A_n-B_n$ for some $B_n \in \CC^{n \times n}$.
\end{itemize}

The multigrid methods considered in this paper are based on the Galerkin approach defined by the two following conditions:
\begin{itemize}
\item the restriction is the conjugate transpose of the prolongation, i.e., $P_n^H$,
\item the coarser matrix is defined by $A_N = P_n^H A_n P_n$. 
\end{itemize}

The Galerkin approach is crucial for our theoretical analysis. Nevertheless, the proposed grid transfer operators can 
be effectively applied also in geometric multigrid methods.

\subsubsection{Convergence and optimality of the Two-Grid method}

\vspace{0.3cm} Let $\smoother{n}{pre}{}$ and $\smoother{n}{post}{}$ be some iterative methods from ${\cal V}(\cdot)$ and
$\nu_{\text{pre}}, \nu_{\text{post}} \in \NN_0$. The simplest of multigrid methods is the Two-Grid Method (TGM), whose $k$-th
iteration is defined by the following algorithm
\begin{equation} \label{alg:tgm}
\begin{array}{c}
\text{TGM} (\smoother{n}{pre}{\nu_{\text{pre}}}, \smoother{n}{post}{\nu_{\text{post}}},P_n) (\mathbf{x}_n^{(k)}) \\
\hline \\
\smallskip
\begin{array}{lll}
0. & \mathbf{\tilde{x}}_n = {\cal V}_{n,\text{pre}}^{\nu_{\text{pre}}}(\mathbf{x}_n^{(k)}) &  \, (\text{pre-smoother}) \\
\smallskip
1. & \mathbf{r}_n = \mathbf{b}_n - A_n \mathbf{\tilde{x}}_n \in \CC^n &  \, (\text{residual}) \\
\smallskip
2. & \mathbf{r}_N = P_n^H \mathbf{r}_n \in \CC^N &  \, (\text{restriction of residual to coarser grid}) \\
\smallskip
3. & A_N = P_n^H A_n P_n \in \CC^{N \times N} & \, (\text{restriction of $A_n$ to coarser grid}) \\
\smallskip
4. & \text{Solve} \quad A_N \mathbf{e} = \mathbf{r}_N &  \, (\text{error equation}) \\
\smallskip
5. & \mathbf{\hat{x}}_n = \mathbf{\tilde{x}}_n + P_n \mathbf{e}_N \in \CC^n &  \, (\text{correction of $\mathbf{\tilde{x}}_n$}) \\
\smallskip
6. & \mathbf{x}_n^{(k+1)} = {\cal V}_{n,\text{post}}^{\nu_{\text{post}}} (\mathbf{\hat{x}}_n) &  \, (\text{post-smoother})
\end{array}
\end{array}
\end{equation}

Steps $1.-5.$ in the above algorithm define the \emph{coarse grid correction} (CGC) operator on $\RR^n$ by
\begin{equation} \label{def:TGM_CGC0}
 CGC_n=I_n - P_n \bigl ( P_n^H A_n P_n \bigr )^{-1} P_n^H A_n.
\end{equation}
The global iteration matrix of the TGM is then given by
\begin{equation} \label{eq:tgm_matrix}
TGM = \smoother{n}{post}{\nu_{\text{post}}} \, CGC_n \, \smoother{n}{pre}{\nu_{\text{pre}}}.
\end{equation}
Theorem \ref{t:tgm_optimality} is a well-known result from \cite{RS1987}, which provides sufficient conditions for convergence of TGM.
For simplicity, we state this result in the case $\nu_{\text{pre}} = 0$. If $\nu_{\text{pre}} \neq 0$, then see \cite{RS1987} for the corresponding statement. 
To formulate Theorem \ref{t:tgm_optimality}, we define
\begin{itemize}
\item $D_n \in \CC^{n \times n}$ to be the diagonal matrix with the diagonal entries of $A_n$,
\item the norm $\norma{\cdot}_{A_n} = \norma{A_n^{1/2} \cdot}_2$ generated by the positive definite matrix $A_n$.
\end{itemize}

\begin{theorem} \label{t:tgm_optimality}
Let $A_n \in \CC^{n \times n}$ be positive definite, $\smoother{n}{post}{} \in {\cal V}(\cdot)$ and $P_n \in \CC^{n \times N}$ be an appropriate
grid transfer operator. If
\begin{enumerate}
\item $ \exists \, \alpha > 0$ independent of $n$ such that
\begin{equation} \label{eq:smooth_property}
\norma{\smoother{n}{post}{} \mathbf{x}_n}_{A_n}^2 \leq \norma{\mathbf{x}_n}_{A_n}^2 - \alpha \norma{\mathbf{x}_n}_{A_n D_n^{-1} A_n}^2, \qquad \forall \mathbf{x}_n \in \CC^n,
\end{equation}
\item $\exists \, \gamma > 0$ independent of $n$ such that
\begin{equation} \label{eq:approx_property}
\min_{\mathbf{y} \in \CC^N} \norma{\mathbf{x}_n - P_n \mathbf{y}}_{D_n}^2 \leq \gamma \norma{\mathbf{x}_n}_{A_n}^2, \qquad \forall \mathbf{x}_n \in \CC^n,
\end{equation}
\end{enumerate}
then $\gamma \geq \alpha$ and \[
\norma{\text{TGM}}_{A_n} \leq \sqrt{1 - \alpha /{\gamma} }.
\]
\end{theorem}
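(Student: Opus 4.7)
The plan is to exploit the fact that the coarse grid correction $CGC_n$ defined in \eqref{def:TGM_CGC0} is, under the Galerkin construction, the $A_n$-orthogonal projector onto the $A_n$-orthogonal complement of $\mathrm{range}(P_n)$. Concretely, a direct computation shows that $P_n(P_n^H A_n P_n)^{-1}P_n^H A_n$ is idempotent and $A_n$-self-adjoint, hence $CGC_n$ is an $A_n$-orthogonal projector. Two consequences will be used repeatedly: first, $\|CGC_n \mathbf{x}_n\|_{A_n} \leq \|\mathbf{x}_n\|_{A_n}$; second, $P_n^H A_n\, CGC_n \mathbf{x}_n = 0$ for every $\mathbf{x}_n \in \CC^n$.

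The second fact is the key to connecting the approximation property \eqref{eq:approx_property} to the smoothing property \eqref{eq:smooth_property}. Writing $\mathbf{u} = CGC_n \mathbf{x}_n$, the orthogonality relation $P_n^H A_n \mathbf{u} = 0$ yields
\begin{equation*}
\|\mathbf{u}\|_{A_n}^2 \,=\, \mathbf{u}^H A_n \mathbf{u} \,=\, (A_n \mathbf{u})^H (\mathbf{u} - P_n \mathbf{y}) \,=\, (D_n^{-1} A_n \mathbf{u})^H D_n (\mathbf{u} - P_n \mathbf{y})
\end{equation*}
for every $\mathbf{y} \in \CC^N$. Applying the Cauchy--Schwarz inequality in the $D_n$-weighted inner product gives
\begin{equation*}
\|\mathbf{u}\|_{A_n}^2 \,\leq\, \|\mathbf{u}\|_{A_n D_n^{-1} A_n} \cdot \|\mathbf{u} - P_n \mathbf{y}\|_{D_n},
\end{equation*}
since $\|D_n^{-1} A_n \mathbf{u}\|_{D_n}^2 = \mathbf{u}^H A_n D_n^{-1} A_n \mathbf{u}$. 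Taking the infimum over $\mathbf{y}$ and invoking \eqref{eq:approx_property} with $\mathbf{u}$ in place of $\mathbf{x}_n$, I obtain after squaring the crucial reverse inequality
\begin{equation*}
\|\mathbf{u}\|_{A_n}^2 \,\leq\, \gamma \, \|\mathbf{u}\|_{A_n D_n^{-1} A_n}^2, \qquad \mathbf{u} = CGC_n \mathbf{x}_n.
\end{equation*}

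The rest is a short chain. Applying the smoothing property \eqref{eq:smooth_property} to $\mathbf{u}$ and inserting the preceding inequality gives
\begin{equation*}
\|\smoother{n}{post}{} \mathbf{u}\|_{A_n}^2 \,\leq\, \|\mathbf{u}\|_{A_n}^2 - \alpha \|\mathbf{u}\|_{A_n D_n^{-1} A_n}^2 \,\leq\, \Bigl(1 - \frac{\alpha}{\gamma}\Bigr) \|\mathbf{u}\|_{A_n}^2,
\end{equation*}
and iterating $\nu_{\text{post}}$ times together with the $A_n$-contractivity of $CGC_n$ yields the desired bound on $\|\mathrm{TGM}\|_{A_n}$ from \eqref{eq:tgm_matrix}. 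The inequality $\gamma \geq \alpha$ drops out automatically: the factor $1 - \alpha/\gamma$ bounds a squared norm, so it must be nonnegative (alternatively, pick any nonzero $\mathbf{u}$ in $\mathrm{range}(CGC_n)$ and combine \eqref{eq:smooth_property} with the displayed inequality above). The main technical point is the Cauchy--Schwarz step that introduces the factor $D_n^{-1/2}$ to bridge the $D_n$-norm in the approximation property and the $A_n D_n^{-1} A_n$-seminorm in the smoothing property; everything else is either the definition of the Galerkin projector or algebraic manipulation.
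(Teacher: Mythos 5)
Your argument is correct and complete: the identification of $CGC_n$ as the $A_n$-orthogonal projector, the Cauchy--Schwarz step in the $D_n$-inner product linking \eqref{eq:approx_property} to the $A_n D_n^{-1} A_n$-seminorm, and the final chain using \eqref{eq:smooth_property} together with the non-expansiveness of the extra smoothing steps constitute the classical Ruge--St\"uben proof. The paper itself does not prove Theorem \ref{t:tgm_optimality} but quotes it from \cite{RS1987}, and your proof is essentially the standard argument given there, so there is nothing to reconcile.
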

Conditions \eqref{eq:smooth_property} and \eqref{eq:approx_property} are called \emph{smoothing} and \emph{approximation} properties, respectively.

\begin{remark}
Note that Theorem \ref{t:tgm_optimality} also implies optimality of TGM, due to $\alpha$ and $\gamma$ being independent of $n$. In other words, the number of iterations needed to reach a given accuracy $\epsilon$ 
is bounded from above by a constant independent of $n$ (but, possibly depending on $\epsilon$).
\end{remark}

\subsubsection{Convergence and optimality of V-cycle}

\vspace{0.3cm} If $N$ is large, then the numerical solution of the linear system at the Step 4. in \eqref{alg:tgm} could be computationally expensive. In this case, one usually applies a multigrid method based on several, 
possibly different, grid transfer operators.
For $\ell \in \NN$, define a decreasing sequence $n = n_0 > n_1 > \dots > n_{\ell-1} > n_{\ell} > 0$ of integers $n_j$.
For each $n_j$, $j = 0, \ldots, \ell-1$, one chooses $P_{n_j} \in \CC^{n_j \times n_{j+1}}$, $\hbox{rank}(P_{n_j})=n_{j+1}$.
Then, for given $\smoother{n_j}{pre}{}$ and $\smoother{n_j}{post}{}$, $j=0, \ldots,\ell-1$, from ${\cal V}(\cdot)$, and for fixed $s \in \NN$, the Multigrid method (MGM)
generates a sequence $\{ \mathbf{x}_n^{(k)} \in \CC^n \, : \, k \in \NN\}$ defined by
\[
\mathbf{x}_n^{(k+1)} = \text{MGM} (\smoother{n}{pre}{\nu_{\text{pre}}}, \smoother{n}{post}{\nu_{\text{post}}}, P_{n_0}, A_n, \mathbf{b}_n, s, 0) (\mathbf{x}_n^{(k)}),
\quad  \nu_{\text{pre}}, \nu_{\text{post}} \in \NN_0,
\]
where the mapping $\hbox{MGM}: \RR^n \rightarrow \RR^n$ is defined iteratively by
\begin{equation} \label{alg:mgm}
\begin{array}{l}
\text{MGM} (\smoother{n_j}{pre}{\nu_{\text{pre}}}, \smoother{n_j}{post}{\nu_{\text{post}}}, P_{n_j }, A_{n_j}, \mathbf{b}_{n_j}, s, j) (\mathbf{x}_{n_j}^{(k)}) \\
\hline \\
\text{If } j=\ell \text{ then solve } A_{n_{\ell}} \mathbf{x}_{n_{\ell}}^{(k+1)} = \mathbf{b}_{n_{\ell}} \\
\text{Else} \\
\begin{array}{ll}
0. & \mathbf{\tilde{x}}_{n_j} = {\cal V}_{{n_j},\text{pre}}^{\nu_{\text{pre}}}(\mathbf{x}_{n_j}^{(k)}) \\
\smallskip
1. & \mathbf{r}_{n_j} =  \mathbf{b}_{n_j} - A_{n_j} \mathbf{\tilde{x}}_{n_j} \in \CC^{n_j} \\
\smallskip
2. & \mathbf{r}_{n_{j+1}} = P_{n_j}^H \mathbf{r}_{n_j} \in \CC^{n_{j+1}} \\
\smallskip
3. & A_{n_{j+1}} = P_{n_j}^H A_{n_j} P_{n_j} \in \CC^{n_{j+1} \times n_{j+1}} \\
\smallskip
4. & \mathbf{x}_{n_{j+1}}^{(k+1)} = 0 \\
\smallskip
    & \text{for } r = 1 \text{ to } s \\
\smallskip
    & \quad \mathbf{x}_{n_{j+1}}^{(k+1)} = \text{MGM} (\smoother{n_{j+1}}{pre}{\nu_{\text{pre}}}, \smoother{n_{j+1}}{post}{\nu_{\text{post}}}, P_{n_{j+1}}, A_{n_{j+1}}, \mathbf{r}_{n_{j+1}}, s, j+1) (\mathbf{x}_{n_{j+1}}^{(k+1)}) \\
\smallskip
5. & \mathbf{\hat{x}}_{n_j} = \mathbf{\tilde{x}}_{n_j} + P_{n_j}  \mathbf{x}_{n_{j+1}}^{(k+1)} \in \CC^{n_{j}} \\
\smallskip
6. & \mathbf{x}_{n_{j}}^{(k+1)} = {\cal V}_{n_j,\text{post}}^{\nu_{\text{post}}} (\mathbf{\hat{x}}_{n_j})
\end{array}
\end{array}
\end{equation}
The choice $s = 1$ corresponds to the well-known V-cycle method \cite{mgm-book}. The iterative structure of V-cycle is
depicted in the following figure.
{\scriptsize
\begin{displaymath}
\xymatrix{
\CC^{n_0} \ar[dr]_{P_{n_0}^H} & & & & & & \CC^{n_0} \\
& \CC^{n_1} \ar[dr]_{P_{n_1}^H} & & & & \CC^{n_1} \ar[ur]_{P_{n_0}} & \\
& & \CC^{n_2} \ar@{.>}[dr]_{P_{n_{\ell-1}}^H} & & \CC^{n_2} \ar[ur]_{P_{n_1}} & & \\
& & & \CC^{n_{\ell}} \ar@{.>}[ur]_{P_{n_{\ell-1}}} & & & }
\end{displaymath}
}
Similarly to the TGM, at each level $j=0, \dots, \ell-1$ of the V-cycle method, one defines the corresponding coarse grid transfer operator by
\begin{equation} \label{def:CGC_nj}
CGC_{n_j} = I_{n_j} - P_{n_j} \bigl ( P_{n_j}^H A_{n_j} P_{n_j} \bigr )^{-1} P_{n_j}^H A_{n_j} \in \CC^{{n_j} \times {n_j}}.
\end{equation}
More precisely, the global iteration matrix of the V-cycle method is $MGM = MGM_0$, where
\begin{equation}
\begin{split}
MGM_{\ell} &= 0 \in \CC^{{n_{\ell}} \times {n_{\ell}}}, \\
MGM_j &= \smoother{n_j}{post}{\nu_{\text{post}}} \, 
\left(I_{n_j} - P_{n_j} \bigl (I_{n_{j+1}} - MGM_{j+1} \bigr )A_{n_{j+1}}^{-1} P_{n_j}^H A_{n_j}\right)
 \, \smoother{n_j}{pre}{\nu_{\text{pre}}}
\end{split}
\end{equation}
for $ j = \ell - 1, \dots, 0$.
The following result is the analogous of Theorem \ref{t:tgm_optimality} for the V-cycle method. We refer to \cite{RS1987} for more details.

\begin{theorem} \label{t:vcycle_optimality}
Let $A_n \in \CC^{n \times n}$ be  positive definite, $\smoother{n_j}{post}{}$, $j=0, \dots, \ell-1$, be from ${\cal V}(\cdot,\cdot)$ and $CGC_{n_j}$, $j=0, \dots, \ell-1$, be from \eqref{def:CGC_nj}. If, for $j=0, \dots, \ell-1$,
\begin{enumerate}
\item $ \exists \, \alpha_j > 0$ independent of $n_j$ such that
\begin{equation} \label{eq:smooth_property_vcycle}
\norma{\smoother{n_j}{post}{} \mathbf{x}_{n_j}}_{A_{n_j}}^2 \leq \norma{\mathbf{x}_{n_j}}_{A_{n_j}}^2 - \alpha_j \norma{\mathbf{x}_{n_j}}_{A_{n_j} D_{n_j}^{-1} A_{n_j}}^2, \qquad \forall \mathbf{x}_{n_j} \in \CC^{n_j},
\end{equation}
\item $\exists \, \gamma_j > 0$ independent of $n_j$ such that
\begin{equation} \label{eq:approx_property_vcycle}
 \norma{CGC_{n_j} \mathbf{x}_{n_j}}_{A_{n_j}}^2 \leq \gamma_j \norma{\mathbf{x}_{n_j}}_{A_{n_j}^2}^2, \qquad \forall \, \mathbf{x}_{n_j} \in \CC^{n_{j}},
\end{equation}
\end{enumerate}
then \[
0< \delta = \min_{j=1,\dots,\ell-1} \frac{\alpha_j}{\gamma_j} <1  \qquad \text{and} \qquad \norma{MGM}_{A_{n}} \leq \sqrt{1-\delta} < 1.
\]
\end{theorem}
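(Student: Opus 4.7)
The plan is to argue by descending induction on the level index $j$, proving the inductive claim $\|MGM_j\|_{A_{n_j}}^2 \leq 1-\delta$ with $\delta := \min_{j=0,\dots,\ell-1} \alpha_j/\gamma_j$. The base case $j=\ell$ is trivial since $MGM_\ell = 0$ by definition.

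For the inductive step, I would fix $\mathbf{x} \in \CC^{n_j}$ and, to lighten the notation, take $\nu_{\text{pre}} = 0$; the general case follows by a final application of the smoothing property on the pre-smoothed iterate. Introducing $\mathbf{y} := A_{n_{j+1}}^{-1} P_{n_j}^H A_{n_j} \mathbf{x} \in \CC^{n_{j+1}}$ and $\mathbf{z} := CGC_{n_j}\mathbf{x} + P_{n_j}\, MGM_{j+1}\,\mathbf{y}$, a direct algebraic manipulation of the recursion defining $MGM_j$ yields $MGM_j \mathbf{x} = \smoother{n_j}{post}{\nu_{\text{post}}} \mathbf{z}$, so the task reduces to estimating $\|\mathbf{z}\|_{A_{n_j}}$ and then applying the smoothing property to $\mathbf{z}$.

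The key structural fact, a direct consequence of the Galerkin condition $A_{n_{j+1}} = P_{n_j}^H A_{n_j} P_{n_j}$, is that $CGC_{n_j}$ is the $A_{n_j}$-orthogonal projector onto $(\mathrm{range}\,P_{n_j})^{\perp_{A_{n_j}}}$ and satisfies $\|P_{n_j}\mathbf{w}\|_{A_{n_j}}^2 = \|\mathbf{w}\|_{A_{n_{j+1}}}^2$ for every $\mathbf{w} \in \CC^{n_{j+1}}$. Combining the resulting Pythagorean identity in the $A_{n_j}$-inner product with $\|\mathbf{y}\|_{A_{n_{j+1}}}^2 = \|\mathbf{x}\|_{A_{n_j}}^2 - \|CGC_{n_j}\mathbf{x}\|_{A_{n_j}}^2$ and with the inductive hypothesis applied to $MGM_{j+1}\mathbf{y}$, I would obtain the core recursive estimate $\|\mathbf{z}\|_{A_{n_j}}^2 \leq (1-\delta)\|\mathbf{x}\|_{A_{n_j}}^2 + \delta \|CGC_{n_j}\mathbf{x}\|_{A_{n_j}}^2$.

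The smoothing property then gives $\|MGM_j\mathbf{x}\|_{A_{n_j}}^2 \leq \|\mathbf{z}\|_{A_{n_j}}^2 - \alpha_j \|\mathbf{z}\|_{A_{n_j}D_{n_j}^{-1}A_{n_j}}^2$, and closing the recursion requires showing that the dissipation term on the right absorbs $\delta\|CGC_{n_j}\mathbf{x}\|_{A_{n_j}}^2$. This absorption is the main obstacle of the proof: the smoother controls $\mathbf{z}$ in the seminorm induced by $A_{n_j}D_{n_j}^{-1}A_{n_j}$, whereas the approximation property bounds $\|CGC_{n_j}\mathbf{x}\|_{A_{n_j}}^2$ by $\gamma_j\|\mathbf{x}\|_{A_{n_j}^2}^2$, so the two weighted norms must be reconciled. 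I would proceed by applying the approximation property to the fixed point $CGC_{n_j}\mathbf{x} = CGC_{n_j}(CGC_{n_j}\mathbf{x})$ to obtain $\|CGC_{n_j}\mathbf{x}\|_{A_{n_j}}^2 \leq \gamma_j\|CGC_{n_j}\mathbf{x}\|_{A_{n_j}^2}^2$, and then exploit once more the $A_{n_j}$-orthogonality of the decomposition of $\mathbf{z}$ in the weighted inner product to isolate the $CGC_{n_j}\mathbf{x}$ contribution inside $\|\mathbf{z}\|_{A_{n_j}D_{n_j}^{-1}A_{n_j}}^2$. The hypothesis $\alpha_j/\gamma_j \geq \delta$ then delivers $\|MGM_j\mathbf{x}\|_{A_{n_j}}^2 \leq (1-\delta)\|\mathbf{x}\|_{A_{n_j}}^2$, completing the induction and yielding $\|MGM\|_{A_n} = \|MGM_0\|_{A_{n_0}} \leq \sqrt{1-\delta} < 1$.
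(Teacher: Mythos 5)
The paper itself offers no proof of this theorem --- it is quoted from Ruge--St\"uben --- so your argument has to stand on its own. Most of it does: the descending induction with the claim $\norma{MGM_j}_{A_{n_j}}^2\le 1-\delta$, the identity $MGM_j\mathbf{x}=V_{n_j,\text{post}}^{\nu_{\text{post}}}\mathbf{z}$ with $\mathbf{z}=CGC_{n_j}\mathbf{x}+P_{n_j}MGM_{j+1}\mathbf{y}$, the $A_{n_j}$-orthogonality of $CGC_{n_j}\mathbf{x}$ to $\mathrm{range}\,P_{n_j}$, the identity $\norma{\mathbf{y}}_{A_{n_{j+1}}}^2=\norma{\mathbf{x}}_{A_{n_j}}^2-\norma{CGC_{n_j}\mathbf{x}}_{A_{n_j}}^2$, and the resulting estimate $\norma{\mathbf{z}}_{A_{n_j}}^2\le(1-\delta)\norma{\mathbf{x}}_{A_{n_j}}^2+\delta\norma{CGC_{n_j}\mathbf{x}}_{A_{n_j}}^2$ are all correct and are exactly the standard route.

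The gap is in the absorption step, which you yourself identify as the main obstacle. You propose to apply the approximation property to $CGC_{n_j}\mathbf{x}$ and then to ``isolate the $CGC_{n_j}\mathbf{x}$ contribution inside $\norma{\mathbf{z}}_{A_{n_j}D_{n_j}^{-1}A_{n_j}}^2$'' by exploiting the orthogonality of the decomposition of $\mathbf{z}$. But that decomposition is orthogonal only in the $A_{n_j}$-inner product (because $P_{n_j}^HA_{n_j}CGC_{n_j}=0$), not in the $A_{n_j}D_{n_j}^{-1}A_{n_j}$- or $A_{n_j}^2$-inner product: the cross term $\langle A_{n_j}D_{n_j}^{-1}A_{n_j}\,CGC_{n_j}\mathbf{x},\,P_{n_j}MGM_{j+1}\mathbf{y}\rangle$ has no reason to vanish or to have a favourable sign, so the inequality $\norma{\mathbf{z}}_{A_{n_j}D_{n_j}^{-1}A_{n_j}}^2\ge\norma{CGC_{n_j}\mathbf{x}}_{A_{n_j}D_{n_j}^{-1}A_{n_j}}^2$ fails in general and the induction does not close. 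The standard repair is to apply \eqref{eq:approx_property_vcycle} to $\mathbf{z}$ itself: since $CGC_{n_j}P_{n_j}=0$ and $CGC_{n_j}^2=CGC_{n_j}$, one has $CGC_{n_j}\mathbf{z}=CGC_{n_j}\mathbf{x}$, hence $\norma{CGC_{n_j}\mathbf{x}}_{A_{n_j}}^2=\norma{CGC_{n_j}\mathbf{z}}_{A_{n_j}}^2\le\gamma_j\norma{\mathbf{z}}_{A_{n_j}^2}^2$, which is (up to the identification of $\norma{\cdot}_{A_{n_j}^2}$ with $\norma{\cdot}_{A_{n_j}D_{n_j}^{-1}A_{n_j}}$, valid here because $D_{n_j}$ is a multiple of the identity in the circulant setting) precisely the quantity the smoothing property \eqref{eq:smooth_property_vcycle} dissipates; then $\delta\norma{CGC_{n_j}\mathbf{x}}_{A_{n_j}}^2\le\delta\gamma_j\norma{\mathbf{z}}_{A_{n_j}^2}^2\le\alpha_j\norma{\mathbf{z}}_{A_{n_j}D_{n_j}^{-1}A_{n_j}}^2$ and your chain of inequalities closes. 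Two minor further points: the theorem assumes the smoothing property only for the post-smoother, so for $\nu_{\text{pre}}>0$ you cannot invoke it on the pre-smoothed iterate without an additional assumption (the paper sidesteps this by taking $\nu_{\text{pre}}=0$, as in Theorem \ref{t:tgm_optimality}); and the assertion $\delta<1$ is not addressed.
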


Conditions \eqref{eq:smooth_property_vcycle} and \eqref{eq:approx_property_vcycle} are also called \emph{smoothing} and \emph{approximation} properties, respectively. 

\begin{remark}
It is well-known that iterative methods such as Gauss-Seidel, weighted Jacobi and weighted Richardson belong to ${\cal V}(\cdot)$ and satisfy the smoothing property \eqref{eq:smooth_property} or \eqref{eq:smooth_property_vcycle} 
for an appropriate choice of the weights (see e.g.~\cite{V-cycle_opt,lemma_smoother}).  
\end{remark}

Thus, the aim of this paper is to derive simpler sufficient conditions for the approximation properties in \eqref{eq:approx_property} for TGM  and in \eqref{eq:approx_property_vcycle} for the V-cycle method. In the case of circulant system matrices $A_n$,
these sufficient conditions will be given in terms of the properties of subdivision schemes, section \ref{sec:conv}.
We decide to restrict our analysis to the formalism based on circulant matrices, instead of the classical LFA, 
to better clarity the link between the symbol analysis of multigrid and properties of Laurent polynomials used in subdivision.
In the following subsection, we, thus, recall the structure of the multigrid method for circulant system matrices and the related convergence results.

\subsubsection{Algebraic multigrid methods for circulant matrices}

\vspace{0.3cm} We assume that the system matrix $A_n \in \CC^{n \times n}$ in \eqref{eq:sys} is circulant. It is well-known that 
the analysis of multigrid for circulant matrices depicts well the properties of multigrid in the case 
of positive definite Toeplitz system matrices and allows to use the matrix algebra structure.

Let $n=g^k$ with $g \in \NN, \, g \geq 2$ and $k \in \NN$. It is well-known that any circulant matrix $A_n=C_n(f)$ can be defined using the Fourier coefficients
$$
a_j =\frac{1}{2 \pi} \int_{0}^{2\pi} f(x) \, e^{- \mathrm{i} j x} dx, \qquad j = -d, \dots, d,
$$
of the trigonometric polynomial $f \colon [0, 2 \pi ) \to \CC$
\[
 f(x) = \sum_{j=-d}^d a_j \, e^{ \mathrm{i} j x}, \quad x \in [0, 2\pi),
\]
of degree $d<n$. More precisely,
\[
 A_n=C_n(f) = \Bigl [ a_{(r-s) \mod{n}} + a_{(r-s) \mod{n} - n} \Bigr ]_{r,s=0}^{n-1}.
\]
Due to $a_j=\overline{a_{-j}}$, $j=-d, \dots, d$, the matrix $C_n(f)$ is hermitian. 
Indeed, denote by $F_n \in \CC^{n \times n}$ the Fourier matrix of order $n$
\[
F_n = \frac{1}{\sqrt{n}} \Bigl [ e^{- \mathrm{i} \frac{2 \pi r s}{n}} \Bigr ]_{r,s = 0}^{n-1} \in \CC^{n \times n}.
\]
It is well known that any circulant matrix $C_n(f) \in \CC^{n \times n}$
satisfies
\begin{equation} \label{eq:circulant_diag}
C_n (f) = F_n \Delta_n (f) F_n^H, \quad \Delta_n (f) = \underset{r=0, \dots, n-1}{\text{diag}} f \left ( x_r^{(n)} \right ) \in \CC^{n \times n}, \quad x_r^{(n)} = \frac{2 \pi r}{n}.
\end{equation}
Hence, due to \eqref{eq:circulant_diag}, if $f \geq 0$, then $C_n(f)$ is symmetric and positive semi-definite. In particular, $C_n (f)$ is singular, if $f(x_r^{(n)})=0$ for some $x_r^{(n)}$, $r \in \{0,\ldots,n-1\}$.
In the latter case, the matrix $A_n$ can be defined as a sum of $C_n (f)$ and a rank one correction such that $A_n$ is positive definite. Such correction, due to Strang, has been considered in the convergence analysis in~\cite{simax}. However, it leads only to unnecessary complication of the notation, since the convergence results are not affected by such rank one correction. Moreover, in applications, $A_n$ is usually positive definite due to incorporated boundary conditions. Therefore, similarly to the analysis based on the LFA, the successive papers on the convergence analysis of multigrid methods for circulant matrices have neglected such a correction (see e.g. \cite{V-cycle_opt}).  We follow this standard approach and refer the interested reader to \cite{simax} for more details on rank one corrections.

In the case of circulant system matrices $A_n$, the grid transfer operators $P_{n_j}$ also
have a special structure. Let $\ell \in \NN, \, 1 \leq \ell \leq k-1$ and define
\begin{equation} \label{eq:vcycle_notation}
 P_{n_j} = C_{n_j} (p) \, K_{n_j,g}^T \in \CC^{n_j \times n_{j+1}}, \qquad n_j = g^{k-j},  
 \qquad j = 0, \dots, \ell-1,
\end{equation}
where $p$ is a certain trigonometric polynomial and $K_{n_j,g}\in \CC^{n_{j+1} \times n_j}$ is the \emph{downsampling matrix} of factor $g$
\begin{equation} \label{eq:down_sampl}
K_{n_j,g} = \quad \begin{bmatrix}
1 & 0_{g-1} \\
  & & 1 & 0_{g-1} \\
  & & & & \ddots & \\
  & & & & & 1 & 0_{g-1}
\end{bmatrix}.
\end{equation}
The operator $K_{n_j,g}$ allows to express $F_{n_{j+1}}$ in terms of $F_{n_j}$, see \cite{BIT2011}, i.e $F_{n_j}$ and $K_{n_j,g}$ satisfy the following packaging property
\begin{equation} \label{eq:pack_freq}
K_{n_j,g} \, F_{n_j} = \frac{1}{\sqrt{g}} \,
\Bigl [ \, \underbrace{\begin{matrix} F_{n_{j+1}} & | & \dots & | & F_{n_{j+1}} \end{matrix}}_{g \text{ times}} \, \Bigr ]
\in \CC^{n_{j+1} \times n_j}.
\end{equation}
This simple relation is the key step in defining multigrid methods for circulant matrices, since it allows us to obtain circulant matrices
$A_{n_{j+1}}$ at the lower levels. In fact, denote the set of \emph{g-corners} of $x \in [0, 2\pi )$ by
\begin{equation} \label{eq:corner_set}
\Omega_g (x) = \Set{ x + \frac{2 \pi j}{g} \pmod{2 \pi} \, : \, j=0, \dots, g-1}.
\end{equation}
It has been proved in \cite{BIT2011} that
\begin{equation} \label{def:Anj_circulant}
\begin{split}
&A_{n_{j+1}} = P_{n_j}^H A_{n_j} P_{n_j} = C_{n_{j+1}} (f_{j+1}), \\
f_{j+1} (x) &= \frac{1}{g} \sum_{y \in \Omega_g (\frac{x}{g})} f_{j} (y) \abs{p(y)}^2, \qquad x \in [0, 2\pi ),
\end{split}
\end{equation}
where $f_j$ are the trigonometric polynomials associated with the circulant matrices $A_{n_j} = C_{n_j} (f_j)$, $j=0, \dots, \ell$, and $f_0 = f$. 

\begin{remark}\label{rem:connection_to_subdivision}
Note that Step 2. in \eqref{alg:mgm} can be interpreted as the lowpass branch of a wavelet decomposition. At each level $n_j$, $j=0, \ldots, \ell-1$, the convolution with the lowpass filter is the multiplication 
by the matrix $C_{n_j}(p)^H$ and the downsampling by $g$ is done via multiplication
by the matrix $K_{n_j,g}$. If the smoother works well, then the residual is smooth and the highpass branches of the wavelet decomposition
contain no additional information and are omitted. The reconstruction is done as usual by upsampling via multiplication by $K_{n_j,g}^T$
and by convolution via multiplication by $C_{n_j}(p)$. It is well-known that upsampling and convolution amount to one step of subdivision
scheme with the corresponding subdivision matrix $P_{n_j}$. It is then natural to study conditions on the subdivision symbols $p$ that
will guarantee convergence and optimality of the corresponding multigrid methods.
\end{remark}

\section{Properties of multigrid methods for circulant matrices and subdivision}\label{sec:conv}

\vspace{0.3cm} In this section, we assume that the system matrix $A_n \in \CC^{n \times n}$ of the linear system \eqref{eq:sys} is circulant.
In subsection \ref{sec:main_result_TGM}, we exhibit a new big class of TGM grid transfer operators $P_n$ defined from
symbols of subdivision schemes with certain polynomial generation properties, see Theorem \ref{t:subd_tgm}.
In subsection \ref{sec:V_cycle_subdivision}, we derive sufficient conditions on the symbols of the multigrid grid transfer operators and recast them in subdivision terms.

\subsection{Two grid method} \label{sec:main_result_TGM}

\vspace{0.3cm} To be able to establish the link between the approximation property \eqref{eq:approx_property} and properties of subdivision schemes, we
 first relax the assumptions of \cite[Theorem 5.1]{BIT2011} for general $g \geq 2$ following 
 the analysis in \cite{BIT2014}.
 These conditions are easy to check for any given grid transfer operator $P_n$.
 Our simplification in Theorem \ref{t:tgm_new} replaces $(ii)$ in \cite[Theorem 5.1]{BIT2011} by an even simpler condition, see
  $(ii)$ in Theorem~\ref{t:tgm_new}.

We denote the set of \emph{g-mirror points} of $x \in [0, 2\pi )$ by
\[
M_g (x): = \Omega_g (x) \setminus \{ x \} = \Set{x + \frac{2 \pi j}{g} \pmod{2 \pi} \, : \, j=1, \dots, g-1}.
\]

\begin{theorem} \label{t:tgm_new}
Let $f$ and $p$ be real trigonometric polynomials such that $f(x_0) = 0$ and $f(x) > 0$, $x \in [0,2\pi) \setminus \{ x_0 \}$.
If $p$ satisfies
\[
\begin{array}{ll} \bigskip
 (i) & {\displaystyle \lim_{x \to x_0} \frac{\abs{p(y)}^2}{f(x)}} < +\infty \qquad \forall \, y \in M_g (x_0), \\
(ii) & \abs{p(x_0)}^2 > 0,
\end{array}
\]
then $P_{n} = C_n (p) \, K_{n,g}^T$ satisfies the \emph{approximation property} \eqref{eq:approx_property}.
\end{theorem}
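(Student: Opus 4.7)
The plan is to establish the approximation property \eqref{eq:approx_property} by exhibiting an explicit test vector $\mathbf{y} \in \CC^N$ and bounding the residual frequency by frequency in the Fourier basis. Since $A_n = C_n(f)$ is circulant, its diagonal entries all equal the positive constant $a_0 = \frac{1}{2\pi}\int_0^{2\pi} f(x)\,dx$, so $D_n = a_0 I_n$ and \eqref{eq:approx_property} is equivalent to
\[
\min_{\mathbf{y} \in \CC^N} \|\mathbf{x}_n - P_n \mathbf{y}\|_2^2 \,\leq\, \frac{\gamma}{a_0}\|\mathbf{x}_n\|_{A_n}^2, \qquad \forall\, \mathbf{x}_n \in \CC^n.
\]

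I would first diagonalize. Setting $\hat{\mathbf{x}} = F_n^H \mathbf{x}_n$ and $\hat{\mathbf{y}} = F_N^H \mathbf{y}$, and using \eqref{eq:circulant_diag} together with the packaging property \eqref{eq:pack_freq} applied to $P_n = C_n(p)K_{n,g}^T$, a direct calculation shows that after reindexing $r \in \{0,\dots,n-1\}$ as $r = r_0 + jN$ with $r_0 \in \{0,\dots,N-1\}$ and $j \in \{0,\dots,g-1\}$, the Fourier image of $P_n \mathbf{y}$ has $r$-th component $g^{-1/2}p(x_{r_0 + jN}^{(n)})\hat{y}_{r_0}$. Because $\{x_{r_0 + jN}^{(n)}: j=0,\dots,g-1\}$ is exactly the corner set $\Omega_g(x_{r_0}^{(n)})$, the residual decouples in Fourier into $N$ independent blocks of size $g$, and the task reduces to choosing each $\hat{y}_{r_0}$ so that the $r_0$-th block is dominated by the corresponding piece $\sum_j f(x_{r_0+jN}^{(n)})|\hat{x}_{r_0+jN}|^2$ of $\|\mathbf{x}_n\|_{A_n}^2$.

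I would then split the blocks into two cases. For $r_0$ whose corner set stays at fixed positive distance from $x_0$, the continuity and strict positivity of $f$ give $\min_{y \in \Omega_g(x_{r_0}^{(n)})} f(y) \geq c > 0$ uniformly in $n$, so the trivial choice $\hat{y}_{r_0}=0$ bounds the block residual by $c^{-1}$ times the corresponding contribution to $\|\mathbf{x}_n\|_{A_n}^2$. For the (at most one) block $r_0^*$ whose corner set contains the grid point $x_{r_0^* + j^* N}^{(n)}$ closest to $x_0$, I would invoke condition \textit{(ii)} to set $\hat{y}_{r_0^*} = \sqrt{g}\,\hat{x}_{r_0^* + j^* N}/p(x_0)$, which annihilates the $j^*$-th residual component. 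The remaining $g-1$ components take the form $\hat{x}_{r_0^* + jN} - \bigl(p(x_{r_0^* + jN}^{(n)})/p(x_0)\bigr)\hat{x}_{r_0^* + j^* N}$; splitting their squares via $|a-b|^2 \leq 2|a|^2 + 2|b|^2$ produces one piece controlled exactly as in the first case (since $x_{r_0^* + jN}^{(n)}$ with $j \neq j^*$ stays away from $x_0$) and a second piece bounded by condition \textit{(i)}, using that these grid points approach the mirror points in $M_g(x_0)$ as $n$ grows.

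The main obstacle I expect is extracting a genuinely $n$-independent constant from condition \textit{(i)}: the hypothesis is a pointwise limit, but the argument above requires a uniform estimate of the form $|p(x_{r_0^* + jN}^{(n)})|^2 \leq C\, f(x_{r_0^* + j^* N}^{(n)})$ valid for all sufficiently large $n$ and all $j \neq j^*$. This follows from a short compactness argument based on the continuity of $p$ and the isolation of the zero $x_0$ of $f$, but it is the single place where hypothesis \textit{(i)} is used in earnest; the rest of the estimates are bookkeeping. Summing the contributions over all $r_0$ then yields the target inequality with a $\gamma$ independent of $n$. The overall structure parallels the analysis of \cite[Theorem~5.1]{BIT2011} for general arity $g$, and the present simplification is precisely that the original second hypothesis on $p$ has been replaced by the much cleaner $|p(x_0)|^2 > 0$.
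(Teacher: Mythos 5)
Your overall strategy --- diagonalize, decouple the residual into $N$ blocks of size $g$ over the corner sets, and exhibit an explicit test vector $\hat{\mathbf y}$ block by block --- is viable and genuinely different in presentation from the paper, which instead invokes \cite{BIT2011} to reduce \eqref{eq:approx_property} to the $g\times g$ matrix inequality $I_g - p[x]^H p[x]/\norma{p[x]}_2^2 \preceq \frac{\gamma}{a_0}\,\mathrm{diag}(f[x])$ for all $x\in[0,2\pi)$, and then bounds the entries of the rescaled matrix $R[x]$ by comparing orders of zeros. However, as written your argument has two concrete gaps. First, the case split over blocks is not exhaustive uniformly in $n$: you give special treatment only to the single block containing the grid point nearest to $x_0$, and claim that every other block satisfies $\min_{y\in\Omega_g(x_{r_0}^{(n)})} f(y)\ge c>0$ with $c$ independent of $n$. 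That fails: for large $n$ there are many blocks (on the order of $\delta n$ of them, for any fixed $\delta>0$) whose corner sets contain a grid point within $\delta$ of $x_0$ without containing the closest one, and for these $\min_{y} f(y)\to 0$ as $n\to\infty$. The correct dichotomy is between blocks whose corner set meets a fixed neighbourhood $(x_0-\delta,x_0+\delta)$ and all the others; every block of the first kind needs the special choice of $\hat y_{r_0}$, and the $n$-independent constant then comes from the compactness argument you already sketch for condition $(i)$.

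Second, the formula $\hat y_{r_0^*}=\sqrt{g}\,\hat x_{r_0^*+j^*N}/p(x_0)$ does not annihilate the $j^*$-th residual component unless $x_0$ happens to be a grid point: that component equals $\bigl(1-p(y_{j^*})/p(x_0)\bigr)\hat x_{r_0^*+j^*N}$, and the leftover factor is generically of size $\abs{y_{j^*}-x_0}$ (or $\abs{y_{j^*}-x_0}^2$ when $p$ has a critical point at $x_0$), whereas the only available term on the right-hand side is $f(y_{j^*})\abs{\hat x_{r_0^*+j^*N}}^2$, which behaves like $\abs{y_{j^*}-x_0}^{m}\abs{\hat x_{r_0^*+j^*N}}^2$ with $m$ the order of the zero of $f$. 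For large $m$ --- exactly the high-order problems the theorem targets --- this term is not controlled. The repair is to divide by $p(y_{j^*})$ rather than $p(x_0)$: condition $(ii)$ and continuity give $\abs{p(y_{j^*})}\ge\abs{p(x_0)}/2>0$ for $\delta$ small, the $j^*$-th component then vanishes exactly, and the remaining components are bounded as you describe via $(i)$ (which also forces $p$ to vanish on $M_g(x_0)$, needed in the degenerate case $y_{j^*}=x_0$). With these two corrections your proof goes through and constitutes a legitimate, more self-contained alternative to the paper's argument.
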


\begin{proof}
The proof consists of three steps. The first and second steps are borrowed from \cite{BIT2011} and \cite{Serra_Tablino}, thus we only state them
shortly. We present in  detail the proof of the main step, \emph{3. step}.

\noindent \emph{1. step:} Let $ {\displaystyle a_0 = \frac{1}{2 \pi} \int_{0}^{2\pi} f(x) dx }$. By Theorem 5.1  in \cite{BIT2011}, \eqref{eq:approx_property} is equivalent to
\begin{equation} \label{eq:new_thesis}
\exists \, \gamma >0 \quad \hbox{independent of $n$ such that} \quad I_n - P_n ( P_n^H P_n )^{-1} P_n^H \preceq \frac{\gamma}{a_0} C_n(f),
\end{equation}
i.e. the matrix $\frac{\gamma}{a_0} C_n(f) - I_n + P_n ( P_n^H P_n )^{-1} P_n^H$ is positive semi-definite.

\noindent \emph{2. step:} For $x \in [0, 2\pi )$, let
$$
 y_j = y_j(x) := x + \frac{2 \pi j}{g} \pmod{2 \pi}, \quad j=0, \dots,g-1,
$$
be the elements of the $g$-corner set $\Omega_g(x)$. Define the row vectors $p[x], \, f[x] \in \CC^{1 \times g}$ by
\[
 p[x]:=\begin{bmatrix} p(y_0) & \dots & p(y_{g-1})\end{bmatrix} \quad \hbox{and} \quad f[x]:= \begin{bmatrix} f(y_0) & \dots & f(y_{g-1}) \end{bmatrix}.
\]
By Theorem 5.1  in \cite{BIT2011}, \eqref{eq:new_thesis} is equivalent to
\begin{equation} \label{eq:new_thesis_2}
 \exists \, \gamma >0 \quad \hbox{independent of $n$ such that} \quad I_g - \frac{ p[x]^H \, \cdot p[x] }{\norma*{p[x]}_2^2} \, \preceq \, \frac{\gamma}{a_0} \, \text{diag } \left (f[x] \right ),
\end{equation}
$\forall \, x \in [0, 2\pi )$.

\noindent \emph{3. step:} We follow the approach of Bolten et al. in \cite{BIT2014}. To prove the claim, we show that assumptions $(i)$ and $(ii)$ imply \eqref{eq:new_thesis_2}. 
To do so, we need to show that the $g \times g$ matrix
\[
R[x] := \left ( \, \text{diag } (f[x]) \right )^{-\frac{1}{2}} \, \left ( I_g - \frac{ p[x]^H \, \cdot p[x] }{\norma*{p[x]}_2^2} \right ) \left ( \, \text{diag } (f[x]) \right )^{-\frac{1}{2}}
\]
is well-defined, i.e. we can bound the modulus of its entries $R[x]_{r,s}$ by
\begin{equation} \label{eq:Rx_well_defined}
\abs*{R[x]_{r,s}} \leq \frac{\gamma}{a_0} < \infty \qquad \forall x \in [0, 2\pi ), \qquad r,s=0, \dots, g-1.
\end{equation}
Note that, for $r,s=0, \dots, g-1$, the entries $R[x]_{r,s}$ of $R[x]$ are given by
\begin{equation} \label{eq:Rx_entries}
\begin{aligned}
R[x]_{r,s} &= - \frac{p(y_r) \overline{p(y_s)}}{\sqrt{f(y_r) f(y_s)} {\displaystyle \sum_{y \in \Omega_g(x)} \abs{p(y)}^2 }}, &\qquad r \neq s, \\
\bigskip
R[x]_{s,s} &= \frac{ {\displaystyle \sum_{y \in M_g(y_s)} \abs{p(y)}^2}}{ f(y_s) {\displaystyle \sum_{y \in \Omega_g(x)} \abs{p(y)}^2 } }, &\qquad r = s.
\end{aligned}
\end{equation}
In the following, we consider two cases, $x \in \Omega_g(x_0)$ and $x \not\in \Omega_g(x_0)$.
If $x \in \Omega_g(x_0)$, then by the definition in \eqref{eq:corner_set}, $\Omega_g(x_0) = \Omega_g(x)$.
Moreover, if $x \in \Omega_g(x_0)$, then $\exists \, s \in \set{0, \dots, g-1}$ such that $y_s = x_0$ and $f(y_s) = 0$. 
If $r \neq s$, then by $(i)$, the order of the zero of $\sqrt{f}$ at $y_s$ matches the order of the zero of $p$ at $y_r$. 
If $r=s$, then again by $(i)$ with
$$
  \sum_{y \in M_g(y_s)} \abs{p(y)}^2= \sum_{y \in M_g(x_0)} \abs{p(y)}^2,
$$
the order of the zero of $f$ at $y_s$ matches the order of the zero of ${ \displaystyle \sum_{y \in M_g(y_s)} } \abs{p(y)}^2$.
It is left to show that ${\displaystyle \sum_{y \in \Omega_g(x)} \abs{p(y)}^2 } > 0$ for any $r,s=0, \ldots, g-1$, then all entries of $R[x]$, $x \in \Omega_g(x_0)$, are well-defined.
The identity $\Omega_g(x_0) = \Omega_g(x)$ and $(i)$ imply that $p(y)=0$ for all $y \in M_g(x_0)$. Thus,
by $(ii)$, we get
$$
 {\displaystyle \sum_{y \in \Omega_g(x)}} \abs{p(y)}^2 = {\displaystyle \sum_{y \in \Omega_g(x_0)}} \abs{p(y)}^2 =\abs{p(x_0)}^2 >0.
$$

\noindent We assume next that $x \notin \Omega_g(x_0)$. First, we notice that if $x \notin \Omega_g(x_0)$, then $x_0 \notin \Omega_g(x)$ and $f(y_s) \neq 0$, $s=0, \dots, g-1$, since $f$ has a unique zero at $x_0$ by hypothesis. Thus, we only need to study the properties of ${\displaystyle \sum_{y \in \Omega_g(x)} \abs{p(y)}^2 }$.

If ${\displaystyle \sum_{y \in \Omega_g(x)} \abs{p(y)}^2 } > 0$, then $\abs{R[x]_{r,s}} < \infty$ for any $r,s=0,\dots,g-1$.

If ${\displaystyle \sum_{y \in \Omega_g(x)} \abs{p(y)}^2 } = 0$, then we need to study the behaviour of its zeros. To do that
we first define, for a trigonometric polynomial $h$, the function $\theta_h \colon \RR \to \NN$ such
that 
\begin{equation}\label{eq:theta}
\theta_h (\bar{x})= m    \qquad
\Longleftrightarrow \qquad
D^{\mu} h (\bar{x}) = 0, \qquad \mu = 0, \ldots, m-1, \qquad D^m h (\bar{x}) \neq 0,
\end{equation}
i.e. $m \in \NN$ is the order of the zero of $h$ at $\bar{x}$.
We rewrite the entries $R[x]_{r,s}$ $r,s=0, \dots,g-1$, of $R[x]$ in \eqref{eq:Rx_entries} and get
\[
\begin{aligned}
R[x]_{r,s} &= - \frac{h_{r,s}(x)}{\sqrt{f(y_r) f(y_s)} \, h(x)}, &\qquad r \neq s, \\
\bigskip
R[x]_{s,s} &= \frac{ h_s (x)}{ f(y_s) \, h(x) }, &\qquad r = s,
\end{aligned}
\]
where
\[
\begin{split}
h(x) &:= \sum_{y \in \Omega_g(x)} \abs{p(y)}^2 = \sum_{j=0}^{g-1} \abs*{p \left (x + 2 \pi j /g  \right )}^2, \\
h_s(x) &:= \sum_{y \in M_g(y_s)} \abs{p(y)}^2 = \sum_{j=0, j \neq s}^{g-1} \abs*{p \left (x + 2 \pi j / g \right) }^2  , \\
h_{r,s} (x) &:= p(y_r) \overline{p(y_s)} = p \left (x + 2 \pi r / g \right) \overline{p \left (x + 2 \pi s / g \right)}.
\end{split}
\]
To prove the boundedness of $R[x]_{r,s}$ $r,s=0, \dots,g-1$, we show that
\[
\theta_{h_s}(x) \geq \theta_h(x), \qquad \text{and} \qquad \theta_{h_{r,s}}(x) \geq \theta_h(x).
\]
Recall that we consider the case when $h(x) = 0$, then $p(y) = 0$ for all $y \in \Omega_g (x)$. Thus, for $\Theta := {\displaystyle \min_{y \in \Omega_g(x)}} \theta_p (y)$, we have $\theta_h(x) = 2 \Theta$.  
Due to $M_g (y_s) \subset \Omega_g (x)$ we get $\theta_{h_s} (x) \geq 2 \Theta$. Similarly, $\theta_{h_{r,s}}(x) \geq 2 \Theta$. And, thus, the claim follows.
\end{proof}

\begin{remark}
If $f$ has an additional zero at some point $x_1 \notin M_g(x_0)$, then we choose a trigonometric polynomial $p$ which satisfies  $(i)$ of Theorem \ref{t:tgm_new} for $y \in M_g(x_0) \cup M_g(x_1)$  and $(ii)$ of Theorem \ref{t:tgm_new} for $x_0$ and $x_1$.
Then, the corresponding $P_{n} = C_n (p) \, K_{n,g}^T$ also satisfies the \emph{approximation property} \eqref{eq:approx_property}.  The proof of the latter is a straightforward generalization of the proof of Theorem \ref{t:tgm_new} and is omitted. If $f$ has an additional zero at some point $x_1 \in M_g(x_0)$, then we choose a different down-sampling factor $\tilde{g} \geq 2$,  $\tilde{g} \neq g$, so that $x_1 \notin M_{\tilde{g}}(x_0)$.
\end{remark}

Under the assumption that the trigonometric polynomial $f$ has a zero at $x_0=0$, Theorem \ref{t:tgm_new} has an equivalent subdivision
formulation, see Theorem \ref{t:subd_tgm}. To state Theorem \ref{t:subd_tgm}, we use Laurent polynomial formalism and
talk about the subdivision symbol $p$ in \eqref{eq:symbol_trigo_poly}.

\begin{theorem} \label{t:subd_tgm}
Let $f$ be a real trigonometric polynomial such that $f(x) > 0$, $x \in (0,2\pi)$, and $D^\mu f(0)=0$, 
$\mu=0,\ldots,m-1$, $D^{m}f(0) \neq 0$. Assume that the subdivision scheme $S_{\mathbf{p}}$ of arity $g$ and with symbol
$$
 p(z) = \mathrm{p}_0 + \sum_{\alpha \in \NN} \mathrm{p}_{\alpha} \left (z^{-\alpha} + z^{\alpha} \right), \qquad z \in \CC \setminus \set{0},
$$
is convergent. If $S_{\mathbf{p}}$ generates polynomials up to degree $\lceil \frac{m}{2} \rceil - 1$, then
the corresponding grid transfer operator $P_n$ satisfies the approximation property \eqref{eq:approx_property}.
\end{theorem}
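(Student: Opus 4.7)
The plan is to reduce Theorem~\ref{t:subd_tgm} to Theorem~\ref{t:tgm_new} applied at $x_0 = 0$. I must verify the two hypotheses of the latter: $(i)$ the ratio $\abs{p(y)}^2/f(x)$ stays bounded as $x \to 0$ for each $y \in M_g(0)$, and $(ii)$ $\abs{p(0)}^2 > 0$.

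The key translation step is to recast the polynomial generation hypothesis as concrete vanishing of $p$ at the mirror points. By Theorem~\ref{t:poly_generation} together with the equivalent factorization \eqref{eq:symbol_factorization_generation}, the assumption that $S_{\mathbf{p}}$ generates polynomials up to degree $d = \lceil m/2 \rceil - 1$ is equivalent to
\[
p(z) = \bigl( 1 + z + z^2 + \cdots + z^{g-1} \bigr)^{\lceil m/2 \rceil}\, b(z)
\]
for some Laurent polynomial $b$. Each nontrivial $g$-th root of unity $\varepsilon \in E_g \setminus \set{1}$ is a simple zero of $1 + z + \cdots + z^{g-1}$, so $p$ vanishes to order at least $\lceil m/2 \rceil$ at each such $\varepsilon$. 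Since the change of variable $z = e^{-\mathrm{i} x}$ is locally a diffeomorphism, this transfers to the statement that the trigonometric polynomial $p(e^{-\mathrm{i} x})$ has a zero of order at least $\lceil m/2 \rceil$ at each $x = 2\pi j/g$, $j = 1, \dots, g-1$, i.e., at every point of $M_g(0)$.

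With that translation in hand, condition $(i)$ is immediate: for each mirror point $y_0 \in M_g(0)$, the factor $\abs{p}^2$ vanishes to order at least $2 \lceil m/2 \rceil \geq m$ at $y_0$, while by hypothesis $f$ vanishes to exactly order $m$ at $0$, so the ratio admits a finite limit as $x \to 0$. Condition $(ii)$ follows from the normalization $p(1) = g$ recorded after Theorem~\ref{t:poly_generation}, which is necessary for convergence of $S_{\mathbf{p}}$; hence $\abs{p(0)}^2 = \abs{p(e^{-\mathrm{i} \cdot 0})}^2 = g^2 > 0$. An application of Theorem~\ref{t:tgm_new} then delivers the approximation property \eqref{eq:approx_property}.

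The only mild point of care is the matching between the order $m$ of the zero of $f$ and the order $\lceil m/2 \rceil$ of the zero of $p$ at mirror points: the squaring in $\abs{p}^2$ doubles the order of vanishing, which is precisely why polynomial generation up to degree $\lceil m/2 \rceil - 1$ is enough, even when $m$ is odd. No analytic work beyond this bookkeeping is required; the argument is essentially a direct substitution into Theorem~\ref{t:tgm_new}.
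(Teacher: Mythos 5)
Your proposal is correct and follows essentially the same route as the paper: both reduce the statement to Theorem~\ref{t:tgm_new} at $x_0=0$ by translating the polynomial generation hypothesis (via Theorem~\ref{t:poly_generation}, or equivalently the factorization \eqref{eq:symbol_factorization_generation}) into vanishing of $p$ of order at least $\lceil m/2\rceil$ at the mirror points $M_g(0)$, and use $p(1)=g$ for condition $(ii)$. Your explicit order-counting argument for why $2\lceil m/2\rceil \ge m$ suffices is a detail the paper leaves implicit, but it is not a different method.
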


\begin{proof}
By Theorem \ref{t:poly_generation} and due to convergence of $S_{\mathbf{p}}$, the symbol $p$ satisfies for $ q \geq \lceil \frac{m}{2} \rceil - 1$
\begin{equation} \label{aux}
\begin{array}{ll}
 (i) & D^\mu p (\varepsilon) = 0 \quad  \mu = 0, \dots, q, \quad \forall \, \varepsilon \in
 \{e^{-i\frac{2\pi j}{g}}\ : \ j=1, \ldots, g-1\}, \\
(ii) & p(1) = g.
\end{array}
\end{equation}
To prove the claim, we show that $(i)$ and $(ii)$ in \eqref{aux} imply conditions $(i)$ and $(ii)$ of Theorem \ref{t:tgm_new}.
For $z=e^{-ix}$, $x \in \RR$, the polynomial $p$ is a real trigonometric polynomial. Thus, we write
$p(y) := p(e^{-iy})$, $y \in [0,2\pi)$. From
\[
 E_g \setminus \set{1} = \{ e^{-iy} \, : \, y \in M_g(0) \}
\]
conditions $(i)$ and $(ii)$ in \eqref{aux} become
\[
\begin{array}{ll}
 (i) & D^\mu p (y) = 0 \qquad  \mu = 0, \dots, q, \qquad q \geq \lceil \frac{m}{2} \rceil - 1, \qquad \forall \, y \in M_g(0), \\
 (ii) & p(0) = g,
\end{array}
\]
which imply assumptions $(i)$ and $(ii)$ of Theorem \ref{t:tgm_new}.
\end{proof}

\subsection{Properties of V-cycle and subdivision} \label{sec:V_cycle_subdivision}

\vspace{0.3cm} 
For the V-cycle, according to the convergence and optimality results in \cite{simax}, the assumptions of Theorem \ref{t:tgm_new} should be strengthen 
to guarantee that the corresponding coarse grid correction operators satisfy the approximation property \eqref{eq:approx_property_vcycle}.
The appropriate modifications of the assumptions of Theorem \ref{t:tgm_new} were given in \cite{V-cycle_opt} for $g=2$.
The following Theorem \ref{t:vcycle} is the generalization of Theorem \ref{t:tgm_new} to the case
$g > 2$.

\begin{theorem} \label{t:vcycle}
Let $f_0, \, p_j$, $j=0, \ldots, \ell-1$, be real trigonometric polynomials such that $f_0(x_0) = 0$ and $f_0(x) > 0$, $x \in [0,2\pi) \setminus \{ x_0 \}$. Let $f_j$, $j=1, \ldots, \ell-1$, be real trigonometric polynomials defined 
as in \eqref{def:Anj_circulant} and such that $f_j(x_j) = 0$, $f_j(x) > 0$, $x \in [0,2\pi) \setminus \{ x_j \}$.
If, for $j=0, \ldots, \ell-1$, $p_j$ satisfy
\[
\begin{array}{llcll} \bigskip
 (i) & {\displaystyle \lim_{x \to x_j} \frac{\abs{p_j(y)}}{f_j(x)}} &<& +\infty & \qquad \forall \, y \in M_g (x_j), \\
 (ii) & {\displaystyle \sum_{y \in \Omega_g(x)}} \abs{p_j(y)}^2 &>& 0 & \qquad \forall \, x \in [0, 2\pi ),
\end{array}
\]
then $A_{n_j}$ in \eqref{def:Anj_circulant} and $CGC_{n_j}$ in \eqref{def:CGC_nj} satisfy the approximation property \eqref{eq:approx_property_vcycle}.
\end{theorem}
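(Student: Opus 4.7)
The plan is to extend the strategy of Theorem~\ref{t:tgm_new} level-by-level across the V-cycle, targeting now the stronger approximation property \eqref{eq:approx_property_vcycle} in place of \eqref{eq:approx_property}. This mirrors the analysis of \cite{V-cycle_opt} for binary arity $g=2$, and extends it to arbitrary $g\geq 2$ by carrying out the $g\times g$ pointwise reduction on the full corner set $\Omega_g(x)$. First I would fix a level $j$ and exploit that $CGC_{n_j}$ is the $A_{n_j}$-orthogonal projector onto $\mathrm{range}(P_{n_j})^{\perp_{A_{n_j}}}$, so that $\|CGC_{n_j}\mathbf{x}\|_{A_{n_j}}^2=\mathbf{x}^H A_{n_j}CGC_{n_j}\mathbf{x}$. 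Consequently \eqref{eq:approx_property_vcycle} is equivalent to
\begin{equation*}
I_{n_j}-\Pi_{n_j}\preceq\gamma_j A_{n_j},
\end{equation*}
where $\Pi_{n_j}$ is the standard orthogonal projector onto $\mathrm{range}(A_{n_j}^{1/2}P_{n_j})$.

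Next, the Fourier diagonalisation \eqref{eq:circulant_diag} of $A_{n_j}=C_{n_j}(f_j)$ combined with the packaging property \eqref{eq:pack_freq} of $K_{n_j,g}$ decouples this global inequality into a family of $g\times g$ pointwise inequalities indexed by $x\in[0,2\pi)$, exactly as in the TGM reduction of \cite[Theorem~5.1]{BIT2011}. Setting $\tilde{p}_j(y):=\sqrt{f_j(y)}\,p_j(y)$ and writing $\tilde{p}_j[x]$, $f_j[x]$ for the row vectors of $\tilde{p}_j,f_j$ evaluated on $\Omega_g(x)$, the pointwise condition takes the form
\begin{equation*}
I_g-\frac{\tilde{p}_j[x]^H\tilde{p}_j[x]}{\|\tilde{p}_j[x]\|_2^2}\preceq\gamma_j\,\mathrm{diag}(f_j[x]).
\end{equation*}
The main work is then to bound, uniformly in $x\in[0,2\pi)$ and in $n_j$, the entries of the rescaled matrix $R_j[x]:=(\mathrm{diag}(f_j[x]))^{-1/2}(I_g-\tilde{p}_j[x]^H\tilde{p}_j[x]/\|\tilde{p}_j[x]\|_2^2)(\mathrm{diag}(f_j[x]))^{-1/2}$, splitting into the two cases $x\in\Omega_g(x_j)$ and $x\notin\Omega_g(x_j)$ as in step~3 of Theorem~\ref{t:tgm_new}. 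As $x=x_j+\delta\to x_j$, the diagonal entry at the index $s$ with $y_s=x_j$ takes the form $1/f_j(y_s)-|p_j(y_s)|^2/\|\tilde{p}_j[x]\|_2^2$ with both terms of order $\delta^{-m_j}$ where $m_j:=\theta_{f_j}(x_j)$; condition $(i)$ in its present form forces $p_j$ to vanish on $M_g(x_j)$ to order at least $m_j$, which makes the two leading singularities cancel and absorbs the off-diagonal entries analogously. For $x\notin\Omega_g(x_j)$, $\mathrm{diag}(f_j[x])$ is bounded away from zero and $(ii)$ keeps $\sum_{y\in\Omega_g(x)}|p_j(y)|^2>0$, so all entries of $R_j[x]$ are finite. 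Since the resulting bound depends only on the trigonometric polynomials $f_j,p_j$, the constant $\gamma_j$ is independent of $n_j$ and Theorem~\ref{t:vcycle_optimality} can be applied at every level in turn, using the hypothesis that each $f_j$ from \eqref{def:Anj_circulant} is a real trigonometric polynomial with a unique zero.

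The principal obstacle, and the reason $(ii)$ is imposed globally on $[0,2\pi)$ rather than only at $x_j$ as in Theorem~\ref{t:tgm_new}, is precisely the case $x\notin\Omega_g(x_j)$ with $\sum_{y\in\Omega_g(x)}|p_j(y)|^2=0$: for TGM this degeneracy was absorbed by the order-of-zero argument $\theta_h(x)=2\Theta$ because the right-hand side was the constant scalar $a_0/\gamma$, but here the weight $f_j[x]$ does not vanish outside $\Omega_g(x_j)$, so no such compensation is possible and the global non-degeneracy $(ii)$ becomes indispensable. Analogously, the reason for strengthening $(i)$ from $|p|^2/f$ to $|p_j|/f_j$ is the extra $\sqrt{f_j}$ weight in $\tilde{p}_j$, which requires twice the vanishing order of $p_j$ on $M_g(x_j)$ in order to match the pole structure of the $A_{n_j}^2$-weighted right-hand side of \eqref{eq:approx_property_vcycle}.
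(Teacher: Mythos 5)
Your proposal is correct and follows essentially the same route as the paper: the reduction of \eqref{eq:approx_property_vcycle} to $I_{n_j}-\Pi_{n_j}\preceq\gamma_j A_{n_j}$, with $\Pi_{n_j}$ the orthogonal projector onto $\mathrm{range}(A_{n_j}^{1/2}P_{n_j})$, is exactly the paper's first step (there quoted from \cite{simax}), and your subsequent $g\times g$ pointwise reduction with $\hat p_j=\sqrt{f_j}\,p_j$ followed by the order-of-zero analysis split over $x\in\Omega_g(x_j)$ versus $x\notin\Omega_g(x_j)$ reproduces the paper's second step, including the cancellation of the two $\delta^{-m_j}$ singularities on the diagonal. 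Only your closing heuristic slightly overstates the role of the global condition $(ii)$: for $x\notin\Omega_g(x_j)$ the paper uses $(ii)$ simply to keep the denominator $\sum_{y\in\Omega_g(x)}\abs{p_j(y)}^2 f_j(y)$ strictly positive (all weights $f_j(y)$ being positive there), which is a direct analogue of, rather than a fundamentally different mechanism from, the argument in Theorem \ref{t:tgm_new}.
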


Before proving Theorem \ref{t:vcycle}, we would like to comment on its hypothesis. Let $j \in \{0, \ldots, \ell-2\}$. If $f_j(x_j) = 0$, $f_j(x) > 0$ for $x \in [0,2\pi) \setminus \{ x_j \}$ and $p_j$ satisfies $(i)$ and $(ii)$ of
Theorem \ref{t:vcycle}, then \cite[Proposition 4.1]{BIT2011} guarantees that $f_{j+1}(x) = 0$ if and only if 
$x = x_{j+1} := g x_j \pmod{2 \pi}$.  Moreover, the order of the zero of $f_{j+1}$ at $x_{j+1}$ coincides with the order of 
the zero of $f_j$ at $x_j$ and $f_{j+1}(x) > 0$ for $x \in [0,2\pi) \setminus \{ x_{j+1} \}$.

\begin{proof}
The proof consists of two steps: the first one is borrowed from \cite{simax}, the second one is similar to \emph{3. step} of the proof of Theorem \ref{t:tgm_new}. Let $j \in \{0, \ldots, \ell-1\}$.

\noindent \emph{1. step:} By  \cite[Proposition 16]{simax}, $A_{n_j}$ in \eqref{def:Anj_circulant} and $CGC_{n_j}$ in \eqref{def:CGC_nj} satisfy the approximation property \eqref{eq:approx_property_vcycle} if and only if
\begin{equation} \label{eq:new_thesis_Vcycle}
\exists \, \gamma_j >0 \quad \hbox{independent of $n_j$ such that} \quad I_{n_j} - \hat{P}_{n_j} ( \hat{P}_{n_j}^H \hat{P}_{n_j} )^{-1} \hat{P}_{n_j}^H \preceq \gamma_j C_{n_j}(f),
\end{equation}
where $\hat{P}_{n_j} := C_{n_j} (\hat{p}_j) \, K_{n_j,g}^T \in \CC^{n_j \times n_{j+1}}$, and $\hat{p}_j (x) := p_j (x)  \sqrt{ f_j (x) }$, $x \in [0, 2\pi)$.

\noindent \emph{2. step:}  To prove the claim, we show that $(i)$ and $(ii)$ imply \eqref{eq:new_thesis_Vcycle}. As shown in \emph{3. step} of the proof of Theorem \ref{t:tgm_new}, \eqref{eq:new_thesis_Vcycle} holds true 
if and only if the entries of the matrix $R[x]$ in \eqref{eq:Rx_entries} are bounded in modulus, where, for 
$y_r,y_s \in \Omega_g(x)$, $x \in [0, 2\pi)$, $r,s = 0, \ldots, g-1$,
\begin{equation} \label{eq:Rx_entries_new}
\begin{aligned}
R[x]_{r,s} &= - \frac{\hat{p_j}(y_r) \overline{\hat{p_j}(y_s)}}{\sqrt{f_j(y_r) f_j(y_s)} {\displaystyle \sum_{y \in \Omega_g(x)} \abs{\hat{p_j}(y)}^2 }}, &\qquad r \neq s, \\
\bigskip
R[x]_{s,s} &= \frac{ {\displaystyle \sum_{y \in M_g(y_s)} \abs{\hat{p_j}(y)}^2}}{ f_j(y_s) {\displaystyle \sum_{y \in \Omega_g(x)} \abs{\hat{p_j}(y)}^2 } }, &\qquad r = s.
\end{aligned}
\end{equation}
Substituting the definition of $\hat{p}_j$ into \eqref{eq:Rx_entries_new}, we get
\begin{equation} \label{eq:Rx_entries_new2}
\begin{aligned}
R[x]_{r,s} &= - \frac{p_j(y_r) \overline{p_j(y_s)}}{{\displaystyle \sum_{y \in \Omega_g(x)} \abs{p_j(y)}^2 f_j(y) }}, &\qquad r \neq s, \\
\bigskip
R[x]_{s,s} &= \frac{ {\displaystyle \sum_{y \in M_g(y_s)} \abs{p_j(y)}^2 f_j(y)}}{ f_j(y_s) {\displaystyle \sum_{y \in \Omega_g(x)} \abs{p_j(y)}^2 f_j(y)} }, &\qquad r = s.
\end{aligned}
\end{equation}
We split the analysis of quantities in \eqref{eq:Rx_entries_new2} into two cases: $x \in \Omega_g (x_j)$ and $x \notin \Omega_g (x_j)$.

If $x \in \Omega_g (x_j)$, then by the definition in \eqref{eq:corner_set}, $\Omega_g(x_j) = \Omega_g (x)$. Thus, the hypothesis 
$f_j(x_j) = 0$ and $(i)$ imply that
\begin{equation} \label{aux1000}
\begin{split}
\sum_{y \in \Omega_g(x)} \abs{p_j(y)}^2 f_j(y) &= \sum_{y \in \Omega_g(x_j)} \abs{p_j(y)}^2 f_j(y) \\
&= \abs{p_j(x_j)}^2 f_j(x_j) + \sum_{y \in M_g(x_j)} \abs{p_j(y)}^2 f_j(y) = 0.
\end{split}
\end{equation}
We define 
\[
\begin{aligned}
h(x) &:= \sum_{y \in \Omega_g(x)} \abs{p_j(y)}^2 f_j(y), \\
h_s(x) &:= \sum_{y \in M_g(y_s)} \abs{p_j(y)}^2 f_j(y), \\
h_{f_j,s} (x) &:= f_j(y_s) \sum_{y \in \Omega_g(x)} \abs{p_j(y)}^2 f_j(y) = f_j(x + 2 \pi s / g) \sum_{y \in \Omega_g(x)} \abs{p_j(y)}^2 f_j(y), \\ 
h_{r,s} (x) &:= p_j(y_r) \overline{p_j(y_s)}.
\end{aligned}
\]
Then, we can rewrite $R[x]_{r,s}$, $r, s = 0, \ldots, g-1$, as
\[
\begin{aligned}
R[x]_{r,s} = - \frac{ h_{r,s} (x) }{ h(x) }, \quad r \neq s,  \quad \hbox{and} \quad R[x]_{s,s} = \frac{ h_s(x) }{ h_{f_j,s} (x) }.
\end{aligned}
\]
To prove the boundedness of $R[x]_{r,s}$ $r,s=0, \dots,g-1$, we show, for $\theta$ as in \eqref{eq:theta}, that
\[
 \theta_{h_{r,s}}(x) \geq \theta_h(x) \qquad \text{and} \qquad \theta_{h_s}(x) \geq \theta_{h_{f_j,s}} (x).
\]
Note  first that $(i)$ and \eqref{aux1000} guarantee that the order of the zero of $h$ at $x$ is the same as the order of the zero 
of $f_j$ at $x_j$. Namely, for $\Theta:=\theta_{f_j} (x_j)$, we have $\theta_h (x)= \Theta$. Due to $(i)$, 
$\theta_{h_{r,s}}(x) \geq \Theta$. Thus, $\theta_{h_{r,s}}(x) \geq \theta_h(x)$.
Since $x \in \Omega_g (x_j)$, there exists $\bar{s} \in \set{0, \ldots, g-1}$ such that $y_{\bar{s}} = x_j$. If $s = \bar{s}$, then, 
by $(i)$ and \eqref{aux1000}, $\theta_{h_s}(x) \geq \theta_{h_{f_j,s}} (x) = 2 \Theta$. Otherwise, 
$\theta_{h_s}(x) = \theta_{h_{f_j,s}} (x) = \Theta$. 

We assume next that $x \notin \Omega_g(x_j)$. First, we notice that, if $x \notin \Omega_g(x_j)$, then $x_j \notin \Omega_g(x)$.
Since $f_j$ has a unique zero at $x_j$ by hypothesis, we have $f_j(y_s) \neq 0$, $s=0, \dots, g-1$.  Thus, we only need to study the properties of ${\displaystyle \sum_{y \in \Omega_g(x)} \abs{p_j(y)}^2 f_j(y) }$.  Since $f_j$ has a unique zero at $x_j$ by hypothesis, by $(ii)$,
we obtain
\[
\sum_{y \in \Omega_g(x)} \abs{p_j(y)}^2 f_j(y) > 0.
\]
And, thus, the claim follows.
\end{proof}

If $f_0(0) = 0$ and $f_0(x) > 0$, $x \in (0, 2\pi)$, then  \cite[Proposition 4.1]{BIT2011} guarantees that every $f_j$, 
$j=1, \ldots, \ell-1$, vanishes only at 0 with the same order  as the one of the zero of $f_0$.
Thus, we use $p_j = p$, $j=0, \ldots, \ell-1$. If $p$ satisfies
\[
\lim_{x \to 0} \frac{\abs{p(y)}}{f_0(x)} < +\infty \qquad \forall \, y \in M_g (0), 
\]
then condition $(i)$ of Theorem \ref{t:vcycle} is satisfied. We, thus, focus on the case $x_0=0$, since it is of practical interest, see e.g.
Examples \ref{sec:examples_biharmonic} and \ref{sec:examples_laplacian}.

Recall, from \eqref{def:CGC_nj}, that one of the main ingredients in the definition of $CGC_{n_j}$ are the grid transfer
operators $P_{n_j} = C_{n_j}(p) K_{n_j,g}^T$. We view again $p$ as the symbol of a convergent subdivision scheme $S_{\mathbf{p}}$.
Our goal is to identify subdivision schemes $S_{\mathbf{p}}$ whose symbols $p$ satisfy assumptions of Theorem \ref{t:vcycle}
for $x_0=0$.

\begin{theorem} \label{t:subd_vcycle}
Let $f$ be a real trigonometric polynomial such that $f(x) > 0$, $x \in (0,2\pi)$, and $D^\mu f(0) = 0$, $\mu=0,\ldots,m-1$, $D^m f(0) \neq 0$.
Assume that the subdivision scheme $S_{\mathbf{p}}$ of arity $g$ and with symbol
$$
 p(z) = \mathrm{p}_0 + \sum_{\alpha \in \NN} \mathrm{p}_{\alpha} \left (z^{-\alpha} + z^{\alpha} \right), \qquad z \in \CC \setminus \set{0},
$$
is convergent. If
\begin{enumerate}
 \item[(i)] $S_{\mathbf{p}}$ generates polynomials up to degree $m-1$,
 \item[(ii)] the basic limit function $\phi$ of $S_{\mathbf{p}}$ is $\ell^{\infty}$-stable,
\end{enumerate}
then the approximation property \eqref{eq:approx_property_vcycle} is satisfied.
\end{theorem}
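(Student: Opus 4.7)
The plan is to deduce Theorem~\ref{t:subd_vcycle} from Theorem~\ref{t:vcycle} by verifying its conditions $(i)$ and $(ii)$ with $x_0 = 0$ and the uniform choice $p_j = p$ for every $j = 0, \ldots, \ell-1$. First, by the paragraph following Theorem~\ref{t:vcycle} (which invokes \cite[Proposition 4.1]{BIT2011}), the hypothesis that $f = f_0$ has a unique zero of order $m$ at $0$ propagates through the levels: every $f_j$ has a unique zero at $x_j = 0$ of the same order $m$ and is strictly positive elsewhere. Thus taking $p_j = p$ is legitimate and reduces the verification of Theorem~\ref{t:vcycle} to two conditions on the single symbol $p$.

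For condition $(i)$ of Theorem~\ref{t:vcycle}, I would combine hypothesis $(i)$ with Theorem~\ref{t:poly_generation}: polynomial generation up to degree $m-1$ is equivalent to $D^{\mu} p(\varepsilon) = 0$ for $\mu = 0, \ldots, m-1$ and $\varepsilon \in E_g \setminus \{1\}$. Read in the frequency variable through $z = e^{-iy}$, this says that $p(y)$ has a zero of order at least $m$ at each $y \in M_g(0)$. Since $f$ has a zero of order exactly $m$ at $0$, a local Taylor expansion at $y = 2\pi k/g$ gives $\lim_{x \to 0} |p(x + 2\pi k/g)|/f(x) < \infty$ for every $k = 1, \ldots, g-1$, which is precisely condition $(i)$ of Theorem~\ref{t:vcycle}.

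The main work is condition $(ii)$ of Theorem~\ref{t:vcycle}, where the $\ell^{\infty}$-stability of $\phi$ enters. I would argue by contradiction. Suppose there is some $x^* \in [0, 2\pi)$ with $\sum_{y \in \Omega_g(x^*)} |p(y)|^2 = 0$, so that $p$ vanishes on the whole $g$-corner set $\Omega_g(x^*)$. Taking Fourier transform of the refinement equation~\eqref{eq:ref_eq} yields $\hat{\phi}(g\xi) = \tfrac{1}{g}\, p(e^{-i\xi})\, \hat{\phi}(\xi)$. Evaluating at $\xi = x^* + 2\pi k/g$ for arbitrary $k \in \ZZ$, and observing that $x^* + 2\pi k/g \pmod{2\pi}$ always lies in $\Omega_g(x^*)$ (since the map $k \mapsto 2\pi k/g \pmod{2\pi}$ has period $g$), I conclude $\hat{\phi}(g x^* + 2\pi k) = 0$ for every $k \in \ZZ$. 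This periodic vanishing of $\hat{\phi}$ contradicts the classical characterisation (Jia--Micchelli) that, for a compactly supported continuous refinable $\phi$, $\ell^{\infty}$-stability is equivalent to linear independence of its integer translates, which is in turn equivalent to $\hat{\phi}$ having no periodic zero. Therefore condition $(ii)$ of Theorem~\ref{t:vcycle} holds and the approximation property~\eqref{eq:approx_property_vcycle} follows.

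The principal obstacle is this last step: the bridge between $\ell^{\infty}$-stability of $\phi$ and the non-vanishing of $\sum_{y \in \Omega_g(x)} |p(y)|^2$. The Fourier argument above is the natural route, but it has to cite the linear-independence/stability characterisation explicitly; the polynomial generation hypothesis plays no role here, only the normalisation $p(1) = g$ (which is anyway implicit in the convergence of $S_{\mathbf{p}}$ and ensures that $\hat{\phi}$ is well-defined with $\hat{\phi}(0)=1$). Once that bridge is set up, the derivation of the contradiction via the Fourier refinement equation is essentially one line, so the proof splits cleanly into the reduction to a single symbol, the Taylor-type matching of zero orders for $(i)$, and the stability-based argument for $(ii)$.
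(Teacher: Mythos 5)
Your proposal is correct and follows essentially the same route as the paper: reduction to Theorem \ref{t:vcycle} with $x_0=0$ and $p_j=p$, matching of zero orders for condition $(i)$, and for condition $(ii)$ the Fourier transform of the refinement equation combined with the Jia--Micchelli criterion $\sup_{\alpha}\,|\hat{\phi}(x+2\pi\alpha)|>0$; the paper merely packages this last step through the two-scale identity for $\Pi_{\phi}(x)=\sum_{\alpha}|\hat{\phi}(x+2\pi\alpha)|^2$ instead of your pointwise evaluation and contradiction, which is an equivalent computation. One small caution: $\ell^{\infty}$-stability is characterised by the absence of \emph{real} $2\pi$-periodic zeros of $\hat{\phi}$, not by linear independence of the integer translates (which is strictly stronger and corresponds to the absence of complex periodic zeros); since you only use the former, your argument is unaffected.
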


\begin{proof}
To prove the claim we show that condition $(i)$ is equivalent to $(i)$ of Theorem \ref{t:vcycle} and that property $(ii)$ implies $(ii)$ of Theorem \ref{t:vcycle}. The equivalence of $(i)$ follows by the same argument as in the proof of Theorem \ref{t:subd_tgm}. Next we show that, if the basic function $\phi$ has $\ell^{\infty}$-stable integer translates, then condition $(ii)$ of Theorem \ref{t:vcycle} is satisfied.
Define the Fourier transform of a continuous, compactly supported $\phi$ by
\[
 \hat{\phi}(x) = \int_{\RR} \phi(t) e^{-\mathrm{i} t \, x} dt, \qquad x \in \RR.
\]
Define also
\[
\Pi_{\phi} (x) = \sum_{\alpha \in \ZZ} \abs{\hat{\phi} (x + 2 \pi \alpha) }^2, \qquad x \in \RR.
\]
Note that, due to the Poisson summation formula, we have
\[
\Pi_{\phi} (x) = \sum_{\alpha \in \ZZ} d_\alpha e^{- \mathrm{i} \alpha x}, \qquad d_\alpha = \int_{\RR} \phi(t) \phi(t-\alpha) dt, \qquad x \in \RR.
\]
The compact support of $\phi$ implies that $\Pi_{\phi}$ is a trigonometric polynomial. Next, we take the Fourier transforms of both sides of the refinement equation \eqref{eq:ref_eq}
and obtain
\[
\hat{\phi} (x) = \frac{1}{g} p \left ( e^{- \mathrm{i} \frac{x}{g}} \right ) \, \hat{\phi} \left ( \frac{x}{g} \right ), \qquad x \in \RR.
\]
Then, following the steps in \cite{stability_arity_g}, we write $\alpha = j+g \beta$, $j=0, \ldots, g-1$, $\beta \in \ZZ$, and get
\[
\begin{split}
\Pi_{\phi} (x) &= \sum_{\alpha \in \ZZ} \abs{\hat{\phi} (x + 2 \pi \alpha) }^2 \\
		    &= \sum_{\alpha \in \ZZ} \frac{1}{g^2} \abs*{p \left ( e^{- \mathrm{i} \frac{x + 2 \pi \alpha}{g}} \right )}^2 \, \abs*{\hat{\phi} \left ( \frac{x + 2 \pi \alpha}{g} \right )}^2 \\
		    &= \sum_{j=0}^{g-1} \frac{1}{g^2} \abs*{p \left ( e^{- \mathrm{i} \frac{x + 2 \pi j}{g}} \right )}^2 \, \sum_{\beta \in \ZZ} \abs*{\hat{\phi} \left ( \frac{x + 2 \pi (j + \beta g) }{g} \right )}^2 \\
		    &= \sum_{j=0}^{g-1} \frac{1}{g^2} \abs*{p \left ( e^{- \mathrm{i} \frac{x + 2 \pi j}{g}} \right )}^2 \, \sum_{\beta \in \ZZ} \abs*{\hat{\phi} \left ( \frac{x + 2 \pi j}{g} + 2 \pi \beta \right )}^2 \\
		    &= \sum_{j=0}^{g-1} \frac{1}{g^2} \abs*{p \left ( e^{- \mathrm{i} \frac{x + 2 \pi j}{g}} \right )}^2 \, \Pi_{\phi} \left ( \frac{x + 2 \pi j}{g} \right ).
\end{split}
\]
It was proved in \cite{Jia_Micc_stability} that a continuous, compactly supported $\phi$ has $\ell^{\infty}$-stable integer translates if and only if
\begin{equation} \label{eq:stability_sup}
 \sup_{\alpha \in \ZZ} \, \abs{\hat{\phi} (x + 2 \pi \alpha) } > 0, \qquad \forall \, x \in \RR.
\end{equation}
This is equivalent to $\Pi_{\phi} (x) > 0, \, \forall \, x \in \RR$.
Thus, we have
\[
\sum_{j=0}^{g-1} \abs*{p \left ( e^{- \mathrm{i} \frac{x + 2 \pi j}{g}} \right )}^2 > 0, \qquad \forall \, x \in \RR.
\]
Since, for $z=e^{-ix}$, $x \in \RR$, the polynomial $p$ is a trigonometric polynomial, we write $p(x) := p(e^{-ix})$, $x \in [0,2\pi)$. Thus,
the claim follows, by the definition of the $g$-corner set $\Omega_g$ in \eqref{eq:corner_set},
\[
\sum_{j=0}^{g-1} \abs*{p \left ( \frac{x + 2 \pi j}{g} \right )} = \sum_{j=0}^{g-1} \abs*{p \left ( \frac{x}{g} + \frac{2 \pi j}{g} \right )}
 =\sum_{y \in \Omega_g(\frac{x}{g})} \abs{p(y)}^2 > 0, \qquad \forall \, x \in \RR.
\]
Therefore, $(ii)$ of Theorem \ref{t:vcycle} is also satisfied.
\end{proof}

If $\phi$ is not given explicitly or $(ii)$ of Theorem \ref{t:subd_vcycle} is difficult to check, one can use an alternative criterion which guarantees the validity of condition \emph{(ii)}
of Theorem \ref{t:vcycle}.

\begin{proposition} \label{p:cohen_simple_vcycle}
Let $p$ be a trigonometric polynomial and $g \in \NN$, $g \ge 2$. If
\[
\abs*{p \left( e^{- \mathrm{i} x} \right)} > 0, \qquad \forall \, x \in \left[-\frac{\pi}{g}, \frac{\pi}{g}\right],
\]
then
\begin{equation} \label{aux1}
\sum_{j=0}^{g-1} \, \abs*{p \left ( e^{- \mathrm{i} \left ( x + \frac{2 \pi j}{g} \right )} \right ) }^2 > 0, \qquad \forall \, x \in [0, 2\pi).
\end{equation}
\end{proposition}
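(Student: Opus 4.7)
The plan is to reduce the claim to a geometric pigeonhole statement on the circle $\mathbb{R}/2\pi\mathbb{Z}$. For fixed $x\in[0,2\pi)$, the $g$ points
\[
 y_j := x + \frac{2\pi j}{g} \pmod{2\pi}, \qquad j=0,\dots,g-1,
\]
form a coset of the cyclic subgroup $\{2\pi j/g\}_{j=0}^{g-1}$ and are therefore equally spaced at distance $2\pi/g$ on the circle. The arc
$A:=[-\pi/g,\pi/g]$ (identified modulo $2\pi$ with $[0,\pi/g]\cup[2\pi-\pi/g,2\pi)$) has length exactly $2\pi/g$. The first key step is to observe that any closed arc whose length equals the (common) spacing of an equally spaced set of $g$ points must contain at least one of those points, since two consecutive points of the set are at distance $2\pi/g$ and the arc cannot fit strictly between them.

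Once this pigeonhole fact is in place, the argument finishes quickly. Pick $j_0\in\{0,\dots,g-1\}$ such that $y_{j_0}\in A$, i.e.\ such that $e^{-\mathrm{i} y_{j_0}}=e^{-\mathrm{i} t}$ for some $t\in[-\pi/g,\pi/g]$. The hypothesis on $p$ then gives $|p(e^{-\mathrm{i} y_{j_0}})|>0$, and therefore
\[
 \sum_{j=0}^{g-1} \left|p\!\left(e^{-\mathrm{i}(x+2\pi j/g)}\right)\right|^2 \;\geq\; \left|p\!\left(e^{-\mathrm{i} y_{j_0}}\right)\right|^2 \;>\;0,
\]
which is \eqref{aux1}. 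Since $x\in[0,2\pi)$ was arbitrary, the proposition follows.

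There is no real obstacle in this argument; the only point requiring mild care is the boundary case when some $y_{j_0}$ sits exactly at $\pm\pi/g$, but the arc is closed so this causes no issue. I would present the proof in two short steps: first the pigeonhole statement on the circle (which can be proved by contradiction, assuming no $y_j$ lies in $A$ and noting that the gap between the two points of the coset neighbouring $A$ would then exceed $2\pi/g$), and second the application of the nonvanishing hypothesis at $y_{j_0}$ to bound the sum from below.
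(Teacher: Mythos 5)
Your proposal is correct and follows essentially the same route as the paper: both arguments boil down to locating, for each $x$, an index $j_0$ such that $x+2\pi j_0/g$ falls (modulo $2\pi$) in the fundamental arc $[-\pi/g,\pi/g]$ where $p$ is assumed nonvanishing, and then bounding the sum below by that single term. The paper carries this out by an explicit index shift and a case analysis over the intervals $[(2k-1)\pi/g,(2k+1)\pi/g)$, whereas you phrase it as a pigeonhole statement about $g$ equally spaced points versus a closed arc of length $2\pi/g$; the content is the same.
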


\begin{proof}
To simplify the arguments, we first rewrite \eqref{aux1} in an equivalent way.  We use the substitution $ j' = j+1$ and get
\[
\begin{split}
&\sum_{j=0}^{g-1} \, \abs*{p \left ( e^{- \mathrm{i} \left ( x + \frac{2 \pi j}{g} \right )} \right ) }^2> 0, \qquad \forall \, x \in [0,2\pi),
\quad
\iff \\
&\sum_{j'=1}^{g} \, \abs*{p \left ( e^{- \mathrm{i} \left ( x + \frac{2 \pi j'}{g} \right )} \right ) }^2 > 0, \qquad \forall \, x \in \left [- \frac{2\pi}{g}, \frac{(2g-2) \pi}{g} \right ).
\end{split}
\]
Straightforwardly, the latter inequality is equivalent to
\[
\sum_{j=1}^{g} \, \abs*{p \left ( e^{- \mathrm{i} \left ( x + \frac{2 \pi j}{g} \right )} \right ) }^2 > 0, \qquad \forall \, x \in \left [- \frac{\pi}{g}, \frac{(2g-1) \pi}{g} \right ).
\]
Let $x \in \left [- \frac{\pi}{g}, \frac{(2g-1) \pi}{g} \right )$. There exists $ k \in \{ 0, \ldots, g-1\}$ such that $x \in \left [ \frac{(2k-1)\pi}{g}, \frac{(2k+1) \pi}{g} \right )$.
Define  $\ell= g - k$. Then $ \ell \in \set{1, \dots, g}$ and
$x + \frac{2 \pi \ell}{g} \in \left [ -\frac{\pi}{g}, \frac{\pi}{g} \right )$, due to
\begin{gather*}
\frac{(2k-1)\pi}{g} \leq \, x < \frac{(2k+1)\pi}{g} \\
\frac{(2g-1)\pi}{g} =\frac{(2k-1)\pi}{g} + \frac{2 \pi \ell}{g} \leq x + \frac{2 \pi \ell}{g} <\frac{(2k+1)\pi}{g} + \frac{2 \pi \ell}{g}=\frac{(2g+1)\pi}{g} \\
- \frac{\pi}{g} \pmod{2 \pi} \leq x + \frac{2 \pi \ell}{g} <  \frac{\pi}{g} \pmod{2 \pi}.
\end{gather*}
By hypothesis, we get
$ \displaystyle
\abs*{p \left ( e^{- \mathrm{i} \left ( x + \frac{2 \pi \ell}{g} \right ) } \right)} > 0
$, which yields the claim
\[
\sum_{j=1}^{g} \, \abs*{p \left ( e^{- \mathrm{i} \left ( x + \frac{2 \pi j}{g} \right )} \right ) }^2 > 0, \qquad \forall \, x \in [0, 2\pi).
\]
\end{proof}

Hypothesis of Proposition \ref{p:cohen_simple_vcycle} is a simplified version of the so-called \emph{Cohen's condition}. This condition was first introduced by Cohen in \cite{Cohen_thesis} and then it was analyzed in depth regarding wavelets
and orthonormality by Daubechies in \cite{Daub_ten_lectures}.

\begin{definition} \label{d:cohen_condition}
We say that a trigonometric polynomial $p$ satisfies Cohen's condition if there exists a compact set $K \subset \RR$ satisfying
\begin{enumerate}
\item[(i)] $0 \in K$,
\item[(ii)] $\abs{K} = 2 \pi$,
\item[(iii)] for all $x \in \RR$, there exists $\ell \in \ZZ$ such that $x + 2 \ell \pi \in K$,
\end{enumerate}
and such that there exists $k_0 > 0$ for which
\[
\abs*{p \left( e^{- \mathrm{i} x } \right)} > 0, \qquad \forall \, x \in  \bigcup_{j=1}^{k_0} g^{-j} K.
\]
\end{definition}

\begin{remark}
If a compact set $K \subset \RR$ satisfies conditions $(ii)$ and $(iii)$ in Definition \ref{d:cohen_condition}, we say that $K$ is congruent to $[-\pi,\pi]$ modulo $2 \pi$.
In Proposition \ref{p:cohen_simple_vcycle}, we require that the trigonometric polynomial $p$ satisfies Cohen's condition with the special choices $K = [-\pi,\pi]$, $k_0 = 1$.
\end{remark}

Finally, using the result of Proposition \ref{p:cohen_simple_vcycle}, we get the following result.

\begin{theorem} \label{t:cohen_simple_vcycle}
Let $f$ be a real trigonometric polynomial such that $f(x) > 0$, $x \in (0,2\pi)$, and $D^\mu f(0) = 0$, $\mu=0,\ldots,m-1$, $D^m f(0) \neq 0$.
If the symbol
$$
 p(z) = \mathrm{p}_0 + \sum_{\alpha \in \NN} \mathrm{p}_{\alpha} \left (z^{-\alpha} + z^{\alpha} \right), \qquad z \in \CC \setminus \set{0},
$$
satisfies
\begin{enumerate}
 \item[(i)] zero conditions of order $m$,
 \item[(ii)] $\abs*{p \left( e^{- \mathrm{i} x} \right)} > 0, \quad \forall \, x \in \left[-\frac{\pi}{g}, \frac{\pi}{g}\right],$
\end{enumerate}
then the approximation property \eqref{eq:approx_property_vcycle} is satisfied.
\end{theorem}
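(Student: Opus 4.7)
The plan is to invoke Theorem \ref{t:vcycle} with $x_0 = 0$ and the choice $p_j = p$ for all $j = 0, \ldots, \ell-1$. The remark immediately preceding Theorem \ref{t:subd_vcycle} already does the propagation work: since $f_0 = f$ vanishes only at $0$ (with order $m$) and is positive elsewhere on $[0,2\pi)$, \cite[Proposition 4.1]{BIT2011} together with $p$ satisfying $(i)$ and $(ii)$ of Theorem \ref{t:vcycle} guarantees that every $f_j$ also vanishes only at $x_j = 0$ and with the same order $m$. Thus I only need to check the two hypotheses of Theorem \ref{t:vcycle} at the single point $0$ with the single symbol $p$.

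For condition $(i)$ of Theorem \ref{t:vcycle}, I would use the zero conditions of order $m$. Writing $y_k = x + 2\pi k/g$ for $k=1,\ldots,g-1$, the point $e^{-iy_k}$ tends as $x \to 0$ to the non-trivial $g$-th root of unity $\varepsilon_k = e^{-2\pi i k/g} \in E_g\setminus\{1\}$, and by hypothesis $(i)$ of Theorem \ref{t:cohen_simple_vcycle},
\[
D^\mu p(\varepsilon_k) = 0, \qquad \mu = 0, \ldots, m-1.
\]
Hence a Taylor expansion around each $\varepsilon_k$ yields $|p(e^{-iy_k})| = O(|x|^m)$ as $x \to 0$. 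Since $f_j$ has a zero of order exactly $m$ at $0$, we have $f_j(x) \sim c_j\,|x|^m$ with $c_j \neq 0$, and therefore
\[
\lim_{x \to 0} \frac{|p(e^{-iy_k})|}{f_j(x)} < +\infty, \qquad k = 1, \ldots, g-1,
\]
which is exactly condition $(i)$ of Theorem \ref{t:vcycle}.

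For condition $(ii)$ of Theorem \ref{t:vcycle}, I simply apply Proposition \ref{p:cohen_simple_vcycle}. Its hypothesis is precisely assumption $(ii)$ of Theorem \ref{t:cohen_simple_vcycle}, and its conclusion
\[
\sum_{j=0}^{g-1} \abs*{p \left ( e^{- \mathrm{i} \left ( x + \frac{2\pi j}{g}\right)} \right)}^2 > 0, \qquad \forall\, x \in [0,2\pi),
\]
is exactly $\sum_{y \in \Omega_g(x)} |p(y)|^2 > 0$, i.e.\ condition $(ii)$ of Theorem \ref{t:vcycle}, for every level $j$. Combining with the previous step, Theorem \ref{t:vcycle} then delivers the approximation property \eqref{eq:approx_property_vcycle}.

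The only delicate point is the matching of the order of the zero of $p$ at the non-trivial roots of unity with the order $m$ of $f$ at $0$ in the ratio $|p(y)|/f_j(x)$ (note: unlike Theorem \ref{t:tgm_new}, the denominator is $f_j(x)$, not its square root), which is why the order $m$ zero conditions on $p$ appear here rather than the order $\lceil m/2 \rceil$ from the TGM case. Everything else is a direct reduction to results already proved in the paper.
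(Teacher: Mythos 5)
Your proof is correct and follows essentially the same route as the paper: reduce to Theorem \ref{t:vcycle} by showing that the zero conditions of order $m$ give condition $(i)$ (the same order-matching argument the paper delegates to the proofs of Theorems \ref{t:subd_tgm} and \ref{t:subd_vcycle}) and that Proposition \ref{p:cohen_simple_vcycle} gives condition $(ii)$. You merely spell out the Taylor-expansion and zero-propagation details that the paper handles by cross-reference.
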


\begin{proof}
We have already shown in the proof of Theorem \ref{t:subd_vcycle} that assumption $(i)$ is equivalent to condition $(i)$ of Theorem \ref{t:vcycle}.
By Proposition \ref{p:cohen_simple_vcycle}, condition $(ii)$  implies \eqref{aux1}. Note that \eqref{aux1} is equivalent to \emph{(ii)} in Theorem \ref{t:vcycle}.
\end{proof}

\section{Grid transfer operators from primal pseudo-splines}\label{sec:splines}

In this section, we define grid transfer operators from well-known subdivision symbols of pseudo-splines introduced in \cite{Dau_Han_Ron_Shen_pseudo}. Recall that we only consider odd symmetric symbols, i.e. we restrict our attention to primal pseudo-splines. This is due to the use of vertex centered discretization in section \ref{sec:num_examples}. 
In section \ref{sec:examples_binary_ps}, we define and analyze grid transfer operators derived from binary pseudo-splines.
Then in section \ref{sec:examples_ternary_ps},  we use symbols of ternary pseudo-splines to define appropriate grid transfer operators.

\subsection{Binary primal pseudo-splines} \label{sec:examples_binary_ps}

We start our discussion by introducing the family of binary primal pseudo-spline schemes.

\begin{definition}[\cite{Dau_Han_Ron_Shen_pseudo}] \label{d:symbol_pseudo}
For integers $J \geq 1$ and $L=0,\dots,N-1$,  the \emph{binary primal pseudo-spline scheme} 
$S_{\mathbf{p}_{J,L}}$ of order $(J,L)$ is given by its symbol
\begin{equation} \label{eq:binary_primal_pseudo_spline_symbol}
p_{J,L}(z) = 2 \, \sigma^J (z) \, q_{J,L}(z), \qquad q_{J,L}(z) = \sum_{k=0}^L \binom{J-1+k}{k} \, \delta^k (z), \qquad z \in \CC \setminus \set{0},
\end{equation}
where \[
\sigma(z) = \frac{(1+z)^2}{4z} \qquad \text{and} \qquad \delta(z) = - \frac{(1-z)^2}{4z}. \]
\end{definition}
These pseudo-spline schemes range from B-splines to Dubuc-Deslauries schemes. When $L=0$ the symbol in \eqref{eq:binary_primal_pseudo_spline_symbol} is the symbol of the B-spline subdivision scheme of degree $2J-1$
and, when $L=J-1$, one gets the symbol of  the $(2J)$-point Dubuc-Deslauries interpolatory subdivision scheme. For more details on binary pseudo-splines see \cite{Dau_Han_Ron_Shen_pseudo, Dong_Shen, Dong_Shen_2, Floater_Muntingh}.

Next, we give several examples of grid transfer operators derived from symbols of binary primal pseudo-splines of order $(J,0)$, 
namely B-splines of degree $2J-1$. The symbols $p_{1,0}$ and $p_{2,0}$ have already been used in multigrid literature \cite{NLL2010,Serra_Tablino} as well the classical cubic interpolation $p_{2,1}$ (see \cite{mgm-book}).

\begin{example} \label{ex:binary_ps1}
For $J \geq 1$ and $L=0$, we have $q_{J,0}(z) \equiv 1$. Thus, from \eqref{eq:binary_primal_pseudo_spline_symbol}, we get
\[
p_{J,0} (z) = 2 \,  \left ( \frac{(1+z)^2}{4z} \right )^J, \qquad z \in \CC \setminus \set{0}.
\]
Set $z=e^{-ix}$, $x \in \RR$. Then the symbols $p_{J,0}$ become trigonometric polynomials 
\[
p_{J,0} (x) = 2 \, \left ( \frac{1 + \cos x}{2} \right )^{J}, \qquad x \in [0,2\pi),
\]
that are used to define grid transfer operators  in \eqref{eq:vcycle_notation}. For readers convenience, we also present the corresponding 
masks. For $J=1,2,3$, they are given by
\[ \begin{split}
\mathbf{p}_{1,0} = \frac12 &\begin{Bmatrix} 1 & 2 & 1 \end{Bmatrix}, \qquad \mathbf{p}_{2,0} = \frac18 \begin{Bmatrix} 1 & 4 & 6 & 4 & 1 \end{Bmatrix}, \\
&\mathbf{p}_{3,0} = \frac{1}{32} \begin{Bmatrix} 1 & 6 & 15 & 20 & 15 & 6 & 1 \end{Bmatrix}.
\end{split}
\]
Note that we use the corresponding grid transfer operators for our numerical examples in Tables \ref{table:binary_res} and \ref{table:binary_res_iga}.
\end{example}

Less known are grid transfer operators which we derive from symbols in \eqref{eq:binary_primal_pseudo_spline_symbol} for $L \not=0$.

\begin{example}  \label{ex:binary_ps2} Let $J =2$ and $L =1$, or $J=3$ and $L=1,2$. Then, from \eqref{eq:binary_primal_pseudo_spline_symbol}, using
standard trigonometric identities, we get
\[
\begin{split}
p_{2,1} (x) &= \frac{1}{16} \, \bigl (16 + 18 \cos x - 2 \cos (3x) \bigr ),  \\
p_{3,1} (x) &= \frac{1}{128} \, \bigl (110 + 144 \cos x + 24\cos (2x) -16 \cos (3x) -6 \cos (4x) \bigr ), \\
p_{3,2} (x) &= \frac{1}{256} \, \bigl (256 + 300 \cos x -50  \cos (3x) + 6 \cos(5x) \bigr ), \quad x \in [0, 2\pi).
\end{split}
\]
The corresponding masks are
\[
\begin{split}
\mathbf{p}_{2,1} &= \frac{1}{16}  \begin{Bmatrix} -1 & 0 & 9 & 16 & 9 & 0 & -1 \end{Bmatrix}, \\
\mathbf{p}_{3,1} &= \frac{1}{128} \begin{Bmatrix} -3 & -8 & 12 & 72 & 110 & 72 & 12 & -8 & -3 \end{Bmatrix}, \\
\mathbf{p}_{3,2} &= \frac{1}{256} \begin{Bmatrix} 3 & 0 & -25 & 0 & 150 & 256 & 150 & 0 & -25 & 0 & 3 \end{Bmatrix}.
\end{split}
\]
Note that the corresponding grid transfer operators also appear in Tables \ref{table:binary_res} and \ref{table:binary_res_iga}.\\
The symbols of the proposed grid transfer operators are plotted in Figure \ref{fig:pro} (a) for the reference interval $[0, \pi]$.
\end{example}

\begin{figure}
	\centering
	\begin{subfigure}{0.49\textwidth}
		\includegraphics[width=\textwidth]{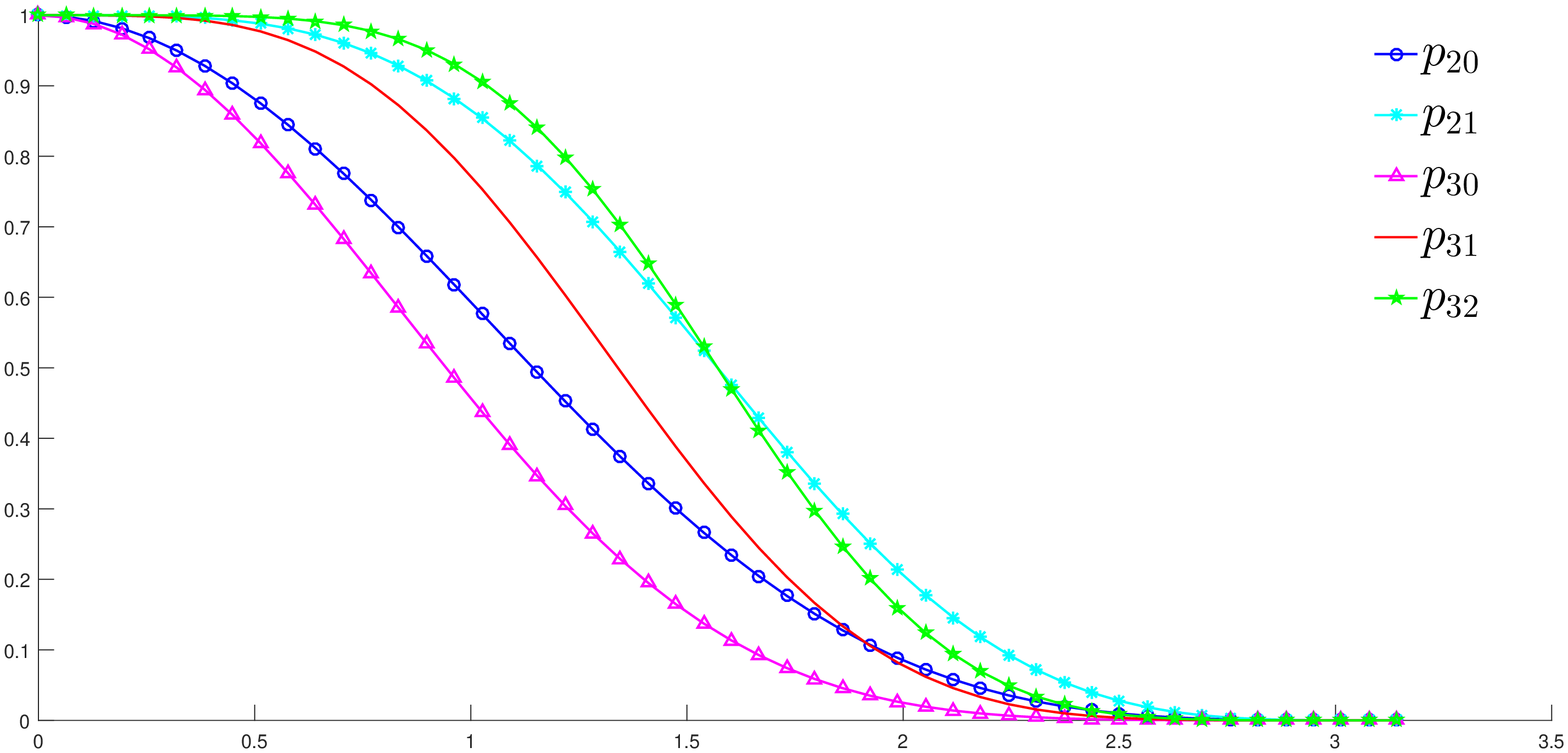}
		\caption{Binary pseudo-splines}
	\end{subfigure}
	\begin{subfigure}{0.49\textwidth}
		\includegraphics[width=\textwidth]{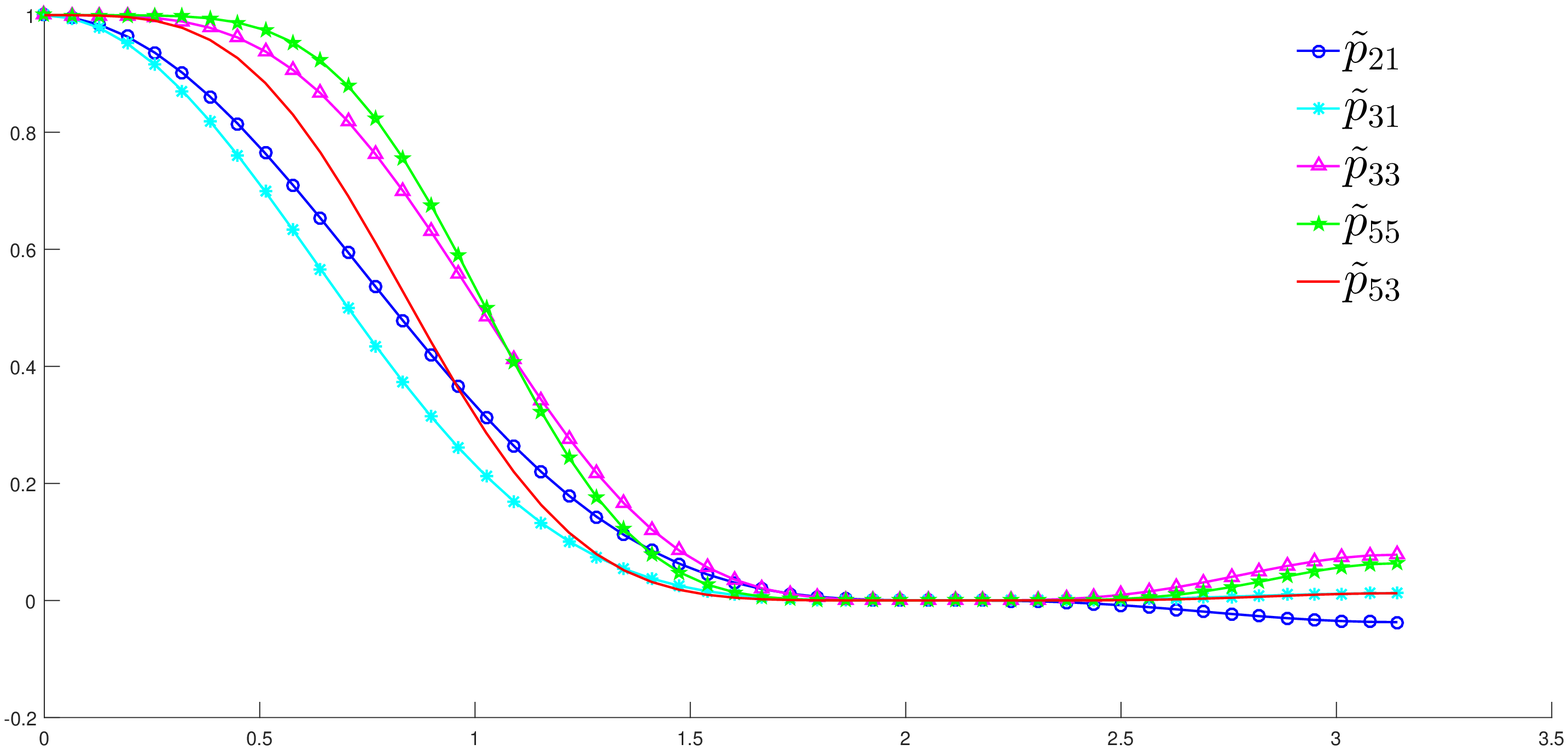}
		\caption{Ternary pseudo-splines}
	\end{subfigure}
	\caption{Symbols of the grid transfer operators defined in (a) by primal binary pseudo-splines and in (b) by primal ternary pseudo-splines in the interval $[0,\pi]$.}
	\label{fig:pro}
\end{figure}

The justification that primal pseudo-spline symbols define good grid transfer operators is given by Theorem \ref{t:subd_vcycle}. The convergence of primal pseudo-splines has been proved by Dong and Shen in \cite{Dong_Shen_2}.
The special structure of the symbols in \eqref{eq:binary_primal_pseudo_spline_symbol}, i.e. the presence of the factors $(1+z)$, implies that the corresponding schemes of order $(J,L)$ generate polynomials up to degree $2J-1$ 
for every $J \geq 1$, $L=0, \dots, J-1$. Thus, $(i)$ of Theorem \ref{t:subd_vcycle} is satisfied. Therefore, it is left to show that the corresponding
basic limit functions are $\ell^{\infty}$-stable. In \cite{Dong_Shen}, the authors addressed this issue. We present an alternative proof of $\ell^{\infty}$-stability of primal pseudo splines for completeness.
To do that, we first recall that in proof of Theorem 2 in \cite{Floater_Muntingh}, the authors showed the following.

\begin{lemma} \label{l:symbol_pseudo_stability}
Let $S_{\mathbf{p}}$ be a convergent subdivision scheme with associated symbol
\[
 p(z) = 2 \left ( \frac{1+z}{2} \right )^{r} \, z^{- \lfloor r/2 \rfloor } \, q(z), \qquad r \geq 1, \qquad z \in \CC \setminus \set{0}.
\]
If $q(e^{-\mathrm{i} x }) > 0$ for all $x \in \RR$, then the basic limit function $\phi$ of $S_{\mathbf{p}}$
is $\ell^{\infty}$-stable.
\end{lemma}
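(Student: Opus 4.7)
The plan is to leverage the Fourier-analytic characterization of $\ell^\infty$-stability that was already invoked in the proof of Theorem \ref{t:subd_vcycle} via \cite{Jia_Micc_stability}: the compactly supported basic limit function $\phi$ is $\ell^\infty$-stable if and only if
\[
\sup_{\alpha \in \ZZ} \abs{\hat\phi(x + 2\pi\alpha)} > 0 \qquad \forall \, x \in \RR.
\]
So the whole task reduces to proving that this non-vanishing condition holds for the $\phi$ associated with the factorized symbol $p$.

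The key idea is to split $\phi$ into a B-spline factor and a remainder. Concretely, I would write $p(z) = p_B(z) \, q(z)$ with $p_B(z) = 2\bigl((1+z)/2\bigr)^r z^{-\lfloor r/2 \rfloor}$, which is (up to a shift) the symbol of the centered cardinal B-spline subdivision scheme of degree $r-1$. Convergence of $S_{\mathbf{p}}$ forces $p(1)=2$, and $p_B(1)=2$, so $q(1)=1$. Iterating the refinement equation \eqref{eq:ref_eq} in frequency (for arity $g=2$) gives
\[
\hat\phi(x) = \prod_{k=1}^\infty \frac{p\bigl(e^{-\mathrm{i} x/2^k}\bigr)}{2} = \hat\phi_B(x) \cdot \hat\psi(x), \qquad \hat\psi(x) := \prod_{k=1}^\infty q\bigl(e^{-\mathrm{i} x/2^k}\bigr),
\]
where $\hat\phi_B$ is the Fourier transform of the B-spline basic limit function.

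Next I would use the two classical facts. First, cardinal B-splines are $\ell^\infty$-stable (their integer translates form a Riesz basis), so by the same Jia--Micchelli criterion the factor $\hat\phi_B$ already satisfies $\sup_{\alpha \in \ZZ} |\hat\phi_B(x+2\pi\alpha)| > 0$ for every $x \in \RR$. Second, I would show that $\hat\psi(y)$ is strictly positive for every $y \in \RR$. Each factor $q(e^{-\mathrm{i} y/2^k})$ is positive by hypothesis, so the partial products are positive; smoothness of $q$ and $q(1)=1$ yield $\abs{q(e^{-\mathrm{i} y/2^k}) - 1} \leq C|y|/2^k$, hence $\sum_k \log q(e^{-\mathrm{i} y/2^k})$ converges absolutely and $\hat\psi(y) = \exp\bigl(\sum_k \log q(e^{-\mathrm{i} y/2^k})\bigr) > 0$. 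Combining: for any $x \in \RR$ pick $\alpha$ with $\hat\phi_B(x+2\pi\alpha) \neq 0$; then $|\hat\phi(x+2\pi\alpha)| = |\hat\phi_B(x+2\pi\alpha)| \, \hat\psi(x+2\pi\alpha) > 0$, which establishes $\ell^\infty$-stability.

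The main obstacle is the middle step, namely controlling the infinite product $\hat\psi(y)$ and ruling out the possibility that it vanishes despite every finite partial product being positive. Pointwise positivity of $q$ on the unit circle is not enough on its own — one genuinely needs the summability estimate $\abs{q(e^{-\mathrm{i} y/2^k})-1} = O(2^{-k})$, which in turn relies crucially on the normalization $q(1)=1$ forced by the convergence of $S_{\mathbf{p}}$. Everything else (the factorization, the refinement-based product formula, and the stability of B-splines) is standard and essentially bookkeeping.
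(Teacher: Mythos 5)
Your argument is correct, but it is worth pointing out that the paper does not actually prove this lemma: it is recalled verbatim as a result established inside the proof of Theorem 2 of \cite{Floater_Muntingh}, so there is no in-paper proof to match step for step. What you have supplied is a self-contained derivation that is fully consistent with the machinery the paper does use elsewhere, namely the Jia--Micchelli criterion $\sup_{\alpha\in\ZZ}\abs{\hat{\phi}(x+2\pi\alpha)}>0$ invoked in the proof of Theorem \ref{t:subd_vcycle}. Your three ingredients all hold up: the factorization $\hat{\phi}=\hat{\phi}_B\cdot\hat{\psi}$ follows from splitting the convergent infinite product $\hat\phi(x)=\prod_{k\ge 1}p(e^{-\mathrm{i}x/2^k})/2$; the B-spline factor satisfies the criterion because $\abs{\hat\phi_B(y)}=\abs{\sin(y/2)/(y/2)}^{r}$ vanishes only on $2\pi\ZZ\setminus\{0\}$ (so one takes $\alpha=0$ if $x\notin 2\pi\ZZ$ and $\alpha=-x/2\pi$ otherwise); and the product $\hat\psi(y)=\prod_k q(e^{-\mathrm{i}y/2^k})$ is strictly positive since every factor is positive, $q(1)=p(1)/p_B(1)=1$, and the Lipschitz bound $\abs{q(e^{-\mathrm{i}y/2^k})-1}\le C\abs{y}2^{-k}$ makes $\sum_k\log q(e^{-\mathrm{i}y/2^k})$ absolutely convergent, so the product cannot collapse to zero. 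You correctly identify this last summability step as the one place where pointwise positivity alone would not suffice. The only presentational caveat is that the hypothesis $q(e^{-\mathrm{i}x})>0$ implicitly requires $q$ to be real on the unit circle (true for the symmetric pseudo-spline factors $q_{J,L}$, which are polynomials in $\sin^2(x/2)$), and you should say so when asserting that each factor of $\hat\psi$ is a positive real number. This approach effectively reproves the Floater--Muntingh statement rather than citing it, which buys the paper a self-contained exposition at the cost of about a page.
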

The result of Lemma \ref{l:symbol_pseudo_stability}, is used in the proof of Proposition \ref{p:pseudo_stable}.

\begin{proposition} \label{p:pseudo_stable}
Let $J\geq 1$, $L=0, \dots, J-1$. The basic limit function $\phi$ of $S_{\mathbf{p}_{J,L}}$  is $\ell^{\infty}$-stable.
\end{proposition}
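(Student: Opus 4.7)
The plan is to apply Lemma \ref{l:symbol_pseudo_stability} to the symbol $p_{J,L}$ of $S_{\mathbf{p}_{J,L}}$. The two things to check are the required factored form of the symbol and the strict positivity of the remaining factor $q_{J,L}$ on the unit circle.

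First, I would rewrite $\sigma(z) = (1+z)^2/(4z) = \left(\tfrac{1+z}{2}\right)^2 z^{-1}$, so that
\[
p_{J,L}(z) \;=\; 2\,\sigma^J(z)\,q_{J,L}(z) \;=\; 2\,\left(\frac{1+z}{2}\right)^{2J} z^{-J}\, q_{J,L}(z), \qquad z \in \CC \setminus \{0\}.
\]
This matches the hypothesis of Lemma \ref{l:symbol_pseudo_stability} with $r=2J$ (so $\lfloor r/2 \rfloor = J$) and with $q = q_{J,L}$. Convergence of $S_{\mathbf{p}_{J,L}}$ is guaranteed by the results of Dong and Shen cited just above the statement.

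Next, I would show that $q_{J,L}(e^{-\mathrm{i} x}) > 0$ for every $x \in \RR$. A direct computation gives, for $z = e^{-\mathrm{i} x}$,
\[
\delta(e^{-\mathrm{i} x}) \;=\; -\frac{(1-e^{-\mathrm{i} x})^2}{4e^{-\mathrm{i} x}} \;=\; -\frac{e^{\mathrm{i} x}-2+e^{-\mathrm{i} x}}{4} \;=\; \frac{1-\cos x}{2} \;=\; \sin^2(x/2).
\]
Hence
\[
q_{J,L}(e^{-\mathrm{i} x}) \;=\; \sum_{k=0}^{L} \binom{J-1+k}{k}\,\sin^{2k}(x/2).
\]
All coefficients $\binom{J-1+k}{k}$ are strictly positive, and the $k=0$ term equals $1$, so the sum is bounded below by $1$ and in particular is strictly positive for every $x \in \RR$.

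Applying Lemma \ref{l:symbol_pseudo_stability} then yields the $\ell^{\infty}$-stability of the basic limit function $\phi$ of $S_{\mathbf{p}_{J,L}}$. The only minor obstacle is the elementary identity $\delta(e^{-\mathrm{i} x}) = \sin^2(x/2)$, which makes the non-negativity of each summand in $q_{J,L}(e^{-\mathrm{i} x})$ transparent; once this is in place, the conclusion is immediate from Lemma \ref{l:symbol_pseudo_stability}.
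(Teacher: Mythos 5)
Your proposal is correct and follows essentially the same route as the paper: both verify that $p_{J,L}$ has the factored form required by Lemma \ref{l:symbol_pseudo_stability} (with $r=2J$) and then use the identity $\delta(e^{-\mathrm{i}x})=\sin^2(x/2)$ to conclude $q_{J,L}(e^{-\mathrm{i}x})>0$. Your version is slightly more explicit about matching the exponents in the lemma's hypothesis, but the argument is the same.
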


\begin{proof}
By Definition \ref{d:symbol_pseudo}, for $J \geq 1$ and $L=0,\dots,J-1$, the symbols $p_{J,L}$ of primal pseudo-spline schemes 
$S_{\mathbf{p}_{J,L}}$ of order $(J,L)$ are of the form required by Lemma \ref{l:symbol_pseudo_stability}, with
\[
q(z) := q_{J,L} (z) =  \sum_{k=0}^L \binom{J-1+k}{k} \, \left ( - \frac{(1-z)^2}{4z} \right )^k, \qquad z \in \CC \setminus \set{0},
\]
i.e.
\[
 q(e^{-\mathrm{i} x }) = \sum_{k=0}^L \binom{J-1+k}{k} \, \sin^{2k} \left ( \frac{x}{2} \right ) > 0, \qquad \forall \, x \in \RR.
\]
\end{proof}
Thus,  $(ii)$ of Theorem \ref{t:subd_vcycle} is also satisfied and it implies
the following result.

\begin{proposition} \label{p:binary_pseudo_vcycle}
Let $f$ be a real trigonometric polynomial such that $f(x) > 0$, $x \in (0,2\pi)$, and $D^\mu f(0) = 0$, $\mu=0,\ldots,m-1$, $D^m f(0) \neq 0$. The corresponding grid transfer operator derived from the symbol
$p_{\lceil m/2 \rceil,L}$, $L \in \set{0, \ldots, \lceil m/2 \rceil-1}$,
 satisfies the approximation property \eqref{eq:approx_property_vcycle}.
\end{proposition}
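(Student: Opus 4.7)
The plan is to verify that, for $J=\lceil m/2 \rceil$ and any admissible $L$, the primal pseudo-spline symbol $p_{J,L}$ fulfills the three hypotheses of Theorem~\ref{t:subd_vcycle}, so that the approximation property \eqref{eq:approx_property_vcycle} follows at once. The work thus boils down to recording, in the right order, what has already been established about binary primal pseudo-splines earlier in the paper and in the references.

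First, the convergence of $S_{\mathbf{p}_{J,L}}$ for every $J\geq 1$ and every $L\in\{0,\dots,J-1\}$ is known from the analysis of Dong and Shen \cite{Dong_Shen_2}, so this hypothesis is available for free. Next, I would read off polynomial generation directly from the factorization in \eqref{eq:binary_primal_pseudo_spline_symbol}: since
\[
p_{J,L}(z) = 2\,\sigma^J(z)\,q_{J,L}(z) = \frac{(1+z)^{2J}}{2^{2J-1}\,z^{J}}\,q_{J,L}(z),
\]
the symbol contains $(1+z)^{2J}$ as a factor, i.e.\ the factorization \eqref{eq:symbol_factorization_generation} with $g=2$ and $d+1 = 2J$. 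By Theorem~\ref{t:poly_generation}, $S_{\mathbf{p}_{J,L}}$ generates polynomials up to degree $2J-1$. For $J=\lceil m/2 \rceil$ we have $2J-1\geq m-1$, so in particular polynomials up to degree $m-1$ are generated, which is exactly hypothesis $(i)$ of Theorem~\ref{t:subd_vcycle}.

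The remaining ingredient, $\ell^{\infty}$-stability of the basic limit function $\phi_{J,L}$, is precisely the content of Proposition~\ref{p:pseudo_stable}, so hypothesis $(ii)$ of Theorem~\ref{t:subd_vcycle} is also in place. Combining these three observations, Theorem~\ref{t:subd_vcycle} applied to the chosen $f$ and $p=p_{\lceil m/2 \rceil, L}$ yields the approximation property \eqref{eq:approx_property_vcycle}, which is the claim.

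There is no real obstacle: the proof is essentially a bookkeeping exercise, collecting convergence (from \cite{Dong_Shen_2}), the binary polynomial generation count $2J-1$ (from the explicit $(1+z)^{2J}$ factor together with Theorem~\ref{t:poly_generation}), and $\ell^{\infty}$-stability (from Proposition~\ref{p:pseudo_stable}), and then invoking Theorem~\ref{t:subd_vcycle}. The only mild point worth being careful about is the ceiling: one must notice that $2\lceil m/2\rceil \geq m$, so the generation degree $2J-1$ is indeed at least $m-1$, regardless of the parity of $m$.
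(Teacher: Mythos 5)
Your proposal is correct and follows essentially the same route as the paper: the paper justifies the proposition by citing convergence from Dong and Shen, reading polynomial generation up to degree $2J-1$ off the $(1+z)^{2J}$ factor in \eqref{eq:binary_primal_pseudo_spline_symbol}, invoking Proposition~\ref{p:pseudo_stable} for $\ell^{\infty}$-stability, and then applying Theorem~\ref{t:subd_vcycle}. Your additional remark that $2\lceil m/2\rceil-1\geq m-1$ is a small but welcome piece of explicitness that the paper leaves implicit.
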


\subsection{Ternary primal pseudo-splines} \label{sec:examples_ternary_ps}

\vspace{0.3cm} We show in section \ref{sec:num_examples}, in the case of PDE discretizations via isogeometric approach with high order B-splines, that the grid transfer operators derived from the binary primal pseudo-spline schemes
lead to computationally expensive multigrid methods. On the contrary,
if we use the ternary primal pseudo-spline schemes, the number of multigrid iterations decreases drastically. 

The recursive definition of ternary pseudo-splines was introduced in \cite{CH2011}. The explicit form of some of
those ternary pseudo-splines is due to personal communication with G. Muntingh.  

\begin{definition} \label{d:symbol_pseudo_ternary}
 Let $J \geq 1$ and $L=2 L' + 1, \, 1 \leq L \leq J$. The symbol $\tilde{p}_{J,L}$ of the \emph{ternary primal pseudo-spline scheme} of order $(J,L)$ is given by 
 \begin{equation} \label{eq:ternary_primal_pseudo_spline_symbol}
 \tilde{p}_{J,L} (z) = 3 \, \tilde{\sigma}^{J+1} (z) \, \tilde{q}_{J,L}(z), \qquad \tilde{q}_{J,L}(z) = \sum_{k=0}^{L'} \binom{J+k}{k} \, \tilde{\delta}^k (z), \qquad z \in \CC \setminus \set{0}, 
\end{equation}
where 
\[
 \tilde{\sigma}(z) = \frac{1+z+z^2}{3z} \qquad \text{and} \qquad \tilde{\delta}(z) = - \frac{(1-z)^2}{3z}. 
\]
\end{definition}

Similarly to the binary case, when $L=1$, the polynomial $\tilde{p}_{J,L}$ is the symbol of the ternary B-spline subdivision scheme 
of degree $J$ and, when $L=J$, $J$ odd, one gets the symbol of the ternary $(J+1)$-point Dubuc-Deslauries interpolatory subdivision scheme.

Next, we show how to derive grid transfer operators from symbols of some ternary primal pseudo-spline schemes.

\begin{example} \label{ex:ternary_ps} Let $J \geq 1$ and $L=1$. Then $\tilde{q}_{J,L}(z) \equiv 1$. From \eqref{eq:ternary_primal_pseudo_spline_symbol}, we obtain
the following symbols of primal pseudo-splines of order $(J,1)$, i.e symbols of the ternary B-splines of degree $J$, 
\[
\tilde{p}_{J,1}(z) = 3 \, \left( \frac{1+z+z^2}{3z} \right )^{J+1}, \qquad z \in \CC \setminus \set{0}.
\]
Set $z=e^{-ix}$, $x \in \RR$. Using simple trigonometric identities, we get the trigonometric polynomials
\[
\tilde{p}_{J,1} (x) =  3 \, \left( \frac{1 + 2 \cos x}{3} \right )^{J+1}, \qquad x \in [0,2\pi).
\]
The corresponding masks for linear ($J=1$), quadratic ($J=2$) and cubic ($J=3$) ternary B-splines are
\begin{gather*}
\mathbf{\tilde{p}}_{1,1} = \frac13 \begin{Bmatrix} 1 & 2 & 3 & 2 & 1 \end{Bmatrix}, \qquad \mathbf{\tilde{p}}_{2,1} = \frac19 \begin{Bmatrix} 1 & 3 & 6 & 7 & 6 & 3 & 1 \end{Bmatrix}, \\
\mathbf{\tilde{p}}_{3,1} = \frac{1}{27} \begin{Bmatrix} 1 & 4 & 10 & 16 & 19 & 16 & 10 & 4 & 1 \end{Bmatrix}.
\end{gather*}
Note that we use the corresponding grid transfer operators to obtain results  in Tables \ref{table:ternary_res} and \ref{table:ternary_res_iga}.
\end{example}

Further examples are obtained for odd $J > 1$ in the following example. These correspond to the ternary $(J+1)$-point Dubuc-Deslauries interpolatory subdivision schemes.

\begin{example} \label{ex:ternary_ps1} Let $J=3,5$ and $L=J$. From \eqref{eq:ternary_primal_pseudo_spline_symbol}, we derive the trigonometric polynomials
\[
\begin{split}
\tilde{p}_{3,3} (x) = \frac{1}{81} \bigl (&81 + 120 \cos x + 60 \cos (2x) - 10 \cos (4x) - 8 \cos (5 x) \bigr ), \\
\tilde{p}_{5,5} (x) = \frac{1}{729} \bigl (&729 + 1120 \cos x + 560 \cos (2x) - 140 \cos (4x) - 112 \cos (5x) + \\
&16 \cos (7x) + 14 \cos (8x) \bigr ),\qquad x \in [0, 2\pi).
\end{split}
\]
The corresponding masks are
\[
\begin{split}
\mathbf{\tilde{p}}_{3,3} = \frac{1}{81} &\begin{Bmatrix} -4 & -5 & 0 & 30 & 60 & 81 & 60 & 30 & 0 & -5 & -4 \end{Bmatrix}, \\
\mathbf{\tilde{p}}_{5,5} = \frac{1}{729} &\begin{Bmatrix} 7 & 8 & 0 & -56 & -70 & 0 & 280 & 560 & 729 \\
& 560 & 280 & 0 & -70 & -56 & 0 & 8 & 7 \end{Bmatrix}.
\end{split}
\]
Note that the corresponding grid transfer operators are also used in Tables \ref{table:ternary_res} and \ref{table:ternary_res_iga}.\\
Figure \ref{fig:pro} (b) depicts the symbols of the grid transfer operators in the previous two examples.
\end{example}

We use Theorem \ref{t:cohen_simple_vcycle} to show that ternary pseudo-splines lead to appropriate grid transfer operators. Note that we could also use Theorem \ref{t:cohen_simple_vcycle} in the binary case. 
To check the assumptions of Theorem \ref{t:cohen_simple_vcycle}, we need the following auxiliary lemma.

\begin{lemma} \label{lemma:Cohen_ternary_ps}
Let $J \geq 1$, $L=2 L' + 1, \, 1 \leq L \leq J$. The symbols $\tilde{p}_{J,L}$ of the ternary primal pseudo spline scheme of order $(J,L)$ satisfy
\begin{equation} \label{eq:cohen_ternary}
\abs*{\tilde{p}_{J,L} \left( e^{- \mathrm{i} x } \right)} > 0, \qquad \forall \, x \in \left [-\frac{\pi}{3}, \frac{\pi}{3} \right ].
\end{equation}
\end{lemma}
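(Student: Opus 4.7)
The plan is to directly evaluate the two building blocks $\tilde{\sigma}$ and $\tilde{\delta}$ on the unit circle and show that, on the interval $[-\pi/3,\pi/3]$, the factor $\tilde{\sigma}^{J+1}$ cannot vanish, while on \emph{all} of $\RR$ the factor $\tilde{q}_{J,L}$ is a sum of non-negative terms bounded below by $1$. Combined with the factorization from Definition \ref{d:symbol_pseudo_ternary}, this immediately gives the claim.

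First I would substitute $z = e^{-\mathrm{i} x}$ and carry out the elementary trigonometric simplification
\[
\tilde{\sigma}(e^{-\mathrm{i} x}) = \frac{1 + e^{-\mathrm{i} x} + e^{-2\mathrm{i} x}}{3 e^{-\mathrm{i} x}} = \frac{1 + 2\cos x}{3},
\qquad
\tilde{\delta}(e^{-\mathrm{i} x}) = -\frac{(1-e^{-\mathrm{i} x})^2}{3 e^{-\mathrm{i} x}} = \frac{2(1-\cos x)}{3} = \frac{4\sin^2(x/2)}{3}.
\]
The point of this step is to make two features transparent: $\tilde{\sigma}(e^{-\mathrm{i} x})$ is a real cosine polynomial that is strictly positive precisely when $\cos x > -1/2$, and $\tilde{\delta}(e^{-\mathrm{i} x})$ is non-negative for every real $x$.

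Next I would restrict to $x \in [-\pi/3,\pi/3]$. There $\cos x \geq 1/2$, so $1 + 2\cos x \geq 2 > 0$, and hence
\[
\tilde{\sigma}^{J+1}(e^{-\mathrm{i} x}) = \left(\frac{1+2\cos x}{3}\right)^{J+1} \geq \left(\frac{2}{3}\right)^{J+1} > 0.
\]
Independently, for every $x \in \RR$,
\[
\tilde{q}_{J,L}(e^{-\mathrm{i} x}) = \sum_{k=0}^{L'} \binom{J+k}{k}\left(\frac{4\sin^2(x/2)}{3}\right)^{k} \geq 1,
\]
since all binomial coefficients are positive, the argument $\tilde{\delta}(e^{-\mathrm{i} x})$ is non-negative, and the $k=0$ term equals $1$.

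Combining these two lower bounds through Definition \ref{d:symbol_pseudo_ternary} yields
\[
\bigl|\tilde{p}_{J,L}(e^{-\mathrm{i} x})\bigr| = 3\,\tilde{\sigma}^{J+1}(e^{-\mathrm{i} x})\,\tilde{q}_{J,L}(e^{-\mathrm{i} x}) \geq 3\left(\frac{2}{3}\right)^{J+1} > 0
\]
for all $x \in [-\pi/3,\pi/3]$, which is the desired inequality. There is no real obstacle here: the whole argument rests on recognising that $\tilde{\delta}|_{z=e^{-\mathrm{i} x}}$ is automatically non-negative (so $\tilde{q}_{J,L}$ needs no interval restriction), and that the natural zeros of $\tilde{\sigma}$ sit at $x = \pm 2\pi/3$, safely outside the interval $[-\pi/3,\pi/3]$ that Cohen's condition demands for ternary arity $g=3$.
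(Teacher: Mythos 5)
Your proof is correct and follows essentially the same route as the paper's: both factor $\tilde{p}_{J,L}$ via Definition \ref{d:symbol_pseudo_ternary}, observe that the $\tilde{\sigma}^{J+1}$ factor vanishes only at $x=\pm 2\pi/3$ (hence is nonzero on $[-\pi/3,\pi/3]$), and note that $\tilde{q}_{J,L}(e^{-\mathrm{i}x})=\sum_{k=0}^{L'}\binom{J+k}{k}\bigl(\tfrac43\sin^2(x/2)\bigr)^k>0$ for all real $x$. Your version is marginally more quantitative, supplying the explicit lower bound $3(2/3)^{J+1}$, but the argument is the same.
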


\begin{proof}
Define
\[
\tilde{P}(x) := \tilde{p}_{J,L} \left ( e^{- \mathrm{i} x} \right ) =  3 \tilde{B}_{J+1}(x) \, \tilde{Q} (x), \qquad x \in [0,2\pi], \]
where
\[
\tilde{B}_{J+1}(x) = \left ( \frac{1 + e^{- \mathrm{i} x} + e^{- 2 \mathrm{i} x}}{3 e^{- \mathrm{i} x} } \right )^{J+1}
\]
and
\[
\tilde{Q} (x) = \sum_{k=0}^{L'} \binom{J+k}{k} \, \left ( \frac43 \sin^2 \left ( \frac{x}{2} \right ) \right )^k.
\]
Note that $\tilde{B}_{J+1}(x)$ vanishes only at $-\frac{2 \pi}{3}$ and $\frac{2 \pi}{3}$. Thus, to check condition \eqref{eq:cohen_ternary}, it suffices to show that $\abs*{\tilde{Q}(x)} > 0$ for all
$x \in \left [-\frac{\pi}{3}, \frac{\pi}{3} \right ]$. The latter holds due to
\[
\tilde{Q} (x) = \sum_{k=0}^{L'} \binom{J+k}{k} \, \left ( \frac43 \sin^2 \left ( \frac{x}{2} \right ) \right )^k > 0, \quad \forall x \in \RR.
\]
\end{proof}
The presence of the factor $(1+z+z^2)^{J+1}$ in $\tilde{p}_{J,L}$ shows that the ternary pseudo spline-schemes of order $(J,L)$ generate 
polynomials up to degree $J$. This, together with $\tilde{p}_{J,L} (1) = 3$ by definition, implies that $\tilde{p}_{J,L}$ satisfies zero conditions of order
$J+1$. Thus, Theorem \ref{t:cohen_simple_vcycle} implies the following Proposition.

\begin{proposition} \label{p:ternary_pseudo_vcycle}
Let $f$ be a real trigonometric polynomial such that $f(x) > 0$, $x \in (0,2\pi)$, and $D^\mu f(0) = 0$, $\mu=0,\ldots,m-1$, $D^m f(0) \neq 0$.
The grid transfer operator derived from the symbol $\tilde{p}_{m-1,L}$, $L=2L' + 1 \in \set{1, \ldots, m-1}$, satisfies the approximation property \eqref{eq:approx_property_vcycle}.
\end{proposition}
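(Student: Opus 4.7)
The plan is to apply Theorem \ref{t:cohen_simple_vcycle} with $g=3$ to the symbol $\tilde{p}_{m-1,L}$. That theorem requires two things: the symbol must satisfy zero conditions of order $m$, and $|\tilde{p}_{m-1,L}(e^{-\mathrm{i}x})|$ must be strictly positive on $[-\pi/3,\pi/3]$. Both pieces are essentially already assembled in the excerpt, so the proof amounts to matching parameters and invoking the right previous results.

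First I would verify the zero conditions. By Definition \ref{d:symbol_pseudo_ternary}, $\tilde{p}_{J,L}(z)=3\,\tilde{\sigma}^{J+1}(z)\,\tilde{q}_{J,L}(z)$, and the factor $\tilde{\sigma}^{J+1}(z)=\bigl((1+z+z^2)/(3z)\bigr)^{J+1}$ vanishes together with its first $J$ derivatives at the two non-trivial cube roots of unity. By Theorem \ref{t:poly_generation}, this is exactly the statement that the scheme generates polynomials up to degree $J$. Combined with the normalization $\tilde{p}_{J,L}(1)=3\,\tilde{\sigma}(1)^{J+1}\,\tilde{q}_{J,L}(1)=3=g$, we obtain the zero conditions of order $J+1$ (as already noted in the paragraph preceding the proposition). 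Setting $J=m-1$ gives zero conditions of exactly order $m$, which is hypothesis (i) of Theorem \ref{t:cohen_simple_vcycle}.

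For the second hypothesis, I would invoke Lemma \ref{lemma:Cohen_ternary_ps} directly: with $J=m-1$ and $L=2L'+1\in\{1,\dots,m-1\}$ (note that $L\le J=m-1$ is consistent with the constraint $1\le L\le J$ of Definition \ref{d:symbol_pseudo_ternary}), the lemma gives $|\tilde{p}_{m-1,L}(e^{-\mathrm{i}x})|>0$ on $[-\pi/3,\pi/3]$, which is precisely condition (ii) of Theorem \ref{t:cohen_simple_vcycle} for $g=3$. With both hypotheses verified, Theorem \ref{t:cohen_simple_vcycle} yields the approximation property \eqref{eq:approx_property_vcycle}, completing the proof.

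There is no real obstacle here, since the hard analytic work is already carried out in Theorem \ref{t:cohen_simple_vcycle} (reduction to the Cohen-type positivity condition via Proposition \ref{p:cohen_simple_vcycle} and Theorem \ref{t:vcycle}) and in Lemma \ref{lemma:Cohen_ternary_ps}. The only bookkeeping point that deserves a sentence in the final write-up is that the order of the zero of $f$ at $0$ dictates the choice $J=m-1$ so that the order of the zero conditions on $\tilde{p}_{J,L}$ matches $m$, and that the admissible range of $L$ in the proposition is consistent with Definition \ref{d:symbol_pseudo_ternary}.
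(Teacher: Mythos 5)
Your proposal is correct and follows essentially the same route as the paper: the text preceding the proposition derives the zero conditions of order $J+1$ from the factor $(1+z+z^2)^{J+1}$ and the normalization $\tilde{p}_{J,L}(1)=3$, then combines this with Lemma \ref{lemma:Cohen_ternary_ps} and applies Theorem \ref{t:cohen_simple_vcycle} with $J=m-1$, exactly as you do. The parameter bookkeeping you flag (matching $J=m-1$ to the order $m$ of the zero of $f$, and the admissible range of $L$) is the same as in the paper.
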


\section{Numerical examples} \label{sec:num_examples}

In this section, we illustrate the results of Propositions \ref{p:binary_pseudo_vcycle} and \ref{p:ternary_pseudo_vcycle} on several examples.
In subsection \ref{sec:examples_biharmonic}, we consider several linear systems $A_n \mathbf{x} = \mathbf{b}_n$, $n = g^k$, $k \in \NN$, derived via finite difference discretization
from biharmonic elliptic PDE problem with homogeneous Dirichlet boundary conditions.  In subsection \ref{sec:examples_laplacian}, we consider
linear systems  $A_n \mathbf{x} = \mathbf{b}_n$ derived via isogeometric approach from Laplacian problem with homogeneous Dirichlet boundary conditions.
In both cases the system matrices are symmetric and positive definite.  The choice of the boundary conditions makes $A_n$ Toeplitz
and dictates the change in the definition of the grid transfer operators in \eqref{eq:vcycle_notation}. Namely,
let $\ell \in \NN$, $1 \leq \ell \leq k-1$ and  define
\begin{equation} \label{eq:vcycle_notation_toeplitz}
n_j = g^{k-j} - 1,  \qquad P_{n_j}(p) = T_{n_j} (p) \, \bar{Z}_{n_j}^T \in \CC^{n_j \times n_{j+1}}, \quad
j=0, \dots, \ell.
\end{equation}
The downsampling matrix $\bar{Z}_{n_j}$ is defined by
\[
\bar{Z}_{n_j} = \quad \begin{bmatrix}
0_{g-1} &1 & 0_{g-1} \\
& & & 1 & 0_{g-1} \\
& & & & & \ddots & \\
& & & & & & 1 & 0_{g-1}
\end{bmatrix} \quad \in \CC^{n_{j+1} \times n_j}.
\]
As pre- and post-smoother, we use one step of Gauss-Seidel method. In the binary case, we solve the coarse grid system 
exactly when the dimension of the coarse grid is $N=2^2-1=3$ and, in the ternary case, when $N=3^2-1=8$. The zero vector is used as the initial guess and 
the stopping criterion is $\norma{\mathbf{r}_s}_2 / \norma{\mathbf{r}_0}_2 < 10^{-7}$, where $\mathbf{r}_s$ is the residual vector after $s$ iterations and $10^{-7}$ is 
the given tolerance. 

\subsection{Biharmonic elliptic PDE} \label{sec:examples_biharmonic}

The first example we present arises from the discretization of a biharmonic elliptic PDE problem with homogeneous Dirichlet boundary conditions. For the discretization, 
we use finite differences of order 4. It leads to the linear systems $A_n \mathbf{x} = \mathbf{b}_n$, where $A_n = T_n (f)$ is the Toeplitz matrix with the symbol 
\[
f(x) = (2 - 2 \cos x)^2, \qquad x \in [0,2\pi).
\]
Note that $f$ has a quadruple zero at $x=0$. Thus, by Propositions \ref{p:binary_pseudo_vcycle} and \ref{p:ternary_pseudo_vcycle} with $m=4$,
the binary pseudo-spline symbols from Example  \ref{ex:binary_ps1} (with $J \ge 2$) and the ternary pseudo-spline symbols from Example \ref{ex:ternary_ps}
(with $J=3$) and all symbols from Examples \ref{ex:binary_ps1} and \ref{ex:ternary_ps1} can be used to define the corresponding grid transfer operators. 
To define $\mathbf{b}_n$, we choose $\mathbf{x}=(x_1, \ldots, x_n)$, $x_j = j/n, \, j = 1, \dots , n$ and set $\mathbf{b}_n := A_n \mathbf{x}$.

Tables \ref{table:binary_res} and \ref{table:ternary_res} show how the number of iterations and convergence rates for the V-cycle change with 
increasing dimension $n$.

\begin{table} 
\centering
\footnotesize
\begin{tabular}{l@{\qquad\;\;}cc@{\qquad\;\;}cc@{\qquad\;\;}cc@{\qquad\;\;}c}
\toprule
{Subdivision} & \multicolumn{2}{c@{\qquad\;\;}}{$n=2^{10}-1$}  & \multicolumn{2}{c@{\qquad\;\;}}{$n=2^{11}-1$}  & \multicolumn{2}{c@{\qquad\;\;}}{$n=2^{12}-1$} & {gen.} \\
{scheme} & {iter} & {conv. rate} & {iter}  & {conv. rate} & {iter} & {conv. rate} & {deg.} \\
\toprule
$p_{1,0}$ (Linear Bspline) & 617 & 0.9742 & 744 & 0.9785 & 801  & 0.9800 & 1 \\
$p_{2,0}$ (Cubic Bspline) & 40 &  0.6647 & 43 &  0.6846 & 45 &  0.6979 & 3 \\
$p_{2,1}$ (Interp. 4 point) & 19 & 0.4275 & 23 & 0.4937 & 26 & 0.5351 & 3 \\
$p_{3,0}$ (Quintic Bspline) & 30 & 0.5784 & 35 &  0.6285 & 41 & 0.6741 & 5\\
$p_{3,1}$ & 19 &  0.4258 & 22 & 0.4748 & 24  & 0.5063 & 5 \\
$p_{3,2}$ (Interp. 6 point) & 13 &  0.2798 & 13 & 0.2879 & 14 & 0.3080 & 5 \\
\bottomrule
\end{tabular}
\caption{Binary subdivision schemes for biharmonic problem}
\label{table:binary_res}
\end{table}

\begin{table} 
\centering
\footnotesize
\begin{tabular}{l@{\qquad\;\;}cc@{\qquad\;\;}cc@{\qquad\;\;}cc@{\qquad\;\;}c}
\toprule
{Subdivision} & \multicolumn{2}{c@{\qquad\;\;}}{$n=3^{6}-1$} & \multicolumn{2}{c@{\qquad\;\;}}{$n=3^{7}-1$} & \multicolumn{2}{c@{\qquad\;\;}}{$n=3^{8}-1$} & {gen.}\\
{scheme} & {iter} & {conv. rate} & {iter} & {conv. rate} & {iter} & {conv. rate} & {deg.} \\
\toprule
$\tilde{p}_{1,1}$ (Linear Bspline) & 462 & 0.9656 & 864 & 0.9815 & 1057 & 0.9841 & 1 \\
$\tilde{p}_{2,1}$ (Quadratic Bspline) & 72  & 0.7990 & 63 & 0.7742 & 50 & 0.7217 & 2 \\
$\tilde{p}_{3,1}$ (Cubic Bspline) & 67  & 0.7858 & 80 & 0.8167 & 87 & 0.8308 & 3 \\
$\tilde{p}_{3,3}$ (Interp. 4-point) & 46 & 0.7017 & 47 & 0.7090 & 53 & 0.7368 & 3 \\
$\tilde{p}_{5,3}$  & 30 & 0.5824 & 31& 0.5878 & 30 & 0.5814 & 5 \\
$\tilde{p}_{5,5}$ (Interp. 6-point) & 39  & 0.6594 & 39  & 0.6604 & 40 & 0.6644 & 5 \\
\bottomrule
\end{tabular}
\caption{Ternary subdivision schemes for biharmonic problem}
\label{table:ternary_res}
\end{table}

Tables \ref{table:binary_res} and \ref{table:ternary_res} illustrate the importance of the polynomial generation property (zero conditions) that, by Theorems~\ref{t:subd_vcycle} and 
\ref{t:cohen_simple_vcycle}, ensures the 
correct choice of the grid transfer operator. The subdivision schemes with the symbols $p_{1,0}$, $\tilde{p}_{1,1}$  generate polynomials of degree $1$. 
The lack of the appropriate degree of polynomial generation leads to dramatic increase of
the number of iterations. 
For ternary schemes $\tilde{p}_{2,1}$ generate polynomials of degree $2$ and so it does not satisfy the assumptions of Theorem~\ref{t:cohen_simple_vcycle}. Nevertheless, such conditions are only sufficient and they could be further relaxed (see e.g. \cite{Serra_Tablino}). Moreover, the quadratic B-splines are very effective as grid transfer operator for ternary methods as shown also in the next example.

We observe that the number of iterations necessary for convergence of the V-cycle is larger in the ternary case (see Table \ref{table:ternary_res}) than 
in the binary case (see Table \ref{table:binary_res}). This happens, since, at each Coarse Grid Correction step, we downsample the data with the factor $g$ and
the larger is $g$ the more information we lose. Thus, the number of iterations required for convergence is larger for $g=3$. Nevertheless, in our tests, the CPU time 
is comparable in both cases, since the length of the V-cycle iteration is shorter in the ternary case ($g=3$).

\subsection{Laplacian problem} \label{sec:examples_laplacian}

In the second example we consider the Laplacian problem with homogeneous Dirichlet boundary conditions
\[
\begin{cases}
-u''(x) = h(x) & x \in [0,1], \\
u(0) = u(1) = 0. &
\end{cases}
\]
We consider the isogeometric approach with collocation by splines for the discretization of the above problem, see~ \cite{Don_Gar_Mann_Serra_Spel2}.  
We fix the integers $\nu, \, \mu > 0$ and define the spline space
\[
{\cal W} = \Set{s \in C^{\mu-1}([0,1]) \, : \, s_{ \big |_{ \left [ \frac{j}{\nu}, \frac{j+1}{\nu} \right )} }  \in \Pi_{\mu}, \, j = 0, \dots, \nu-1, \, s(0) = s(1) = 0},
\]
the finite dimensional approximation space of dimension $n = \text{dim }{\cal W} = \nu+\mu-2$. As a basis for ${\cal W}$, one chooses the 
B-splines $B_{j}^{[\mu]} \colon [0,1] \to \RR$, $j=2, \ldots, \nu+\mu-1$ of degree $\mu$ as explained in \cite{DeBoor_spline}. These are defined over the uniform knot sequence of length $\nu + 2 \mu +1$
\[
t_1 = \dots = t_{\mu+1} = 0 < t_{\mu+2} < \dots < t_{\mu + \nu} < 1 = t_{\nu+\mu + 1} = \dots = t_{\nu + 2 \mu +1},
\]
where
\[
t_{\mu + j + 1} = \frac{j}{\nu}, \quad j = 1, \dots, \nu-1,
\]
and the extreme knots have multiplicity $\mu+1$. We recall that the B-splines $B_{j}^{[\mu]} \colon [0,1] \to \RR$  are  defined recursively by
\[
B_j^{[0]} (x) = \begin{cases}
1 & x \in [t_j, t_{j+1}), \\
0 & \text{otherwise},
\end{cases}
\quad j = 1, \dots, \nu + 2\mu,
\]
and
\[
B_j^{[m]} (x) = \frac{x - t_j}{t_{j+m} - t_j} \,  B_j^{[m-1]} (x) + \frac{t_{j+m+1} - x}{t_{j+m+1} - t_{j+1}} \, B_{j+1}^{[m-1]} (x), 
\]
$j = 1, \dots, (\nu + \mu) + \mu - m, \, m = 1, \dots, \mu,$, where we set the fractions with zero denominators to be equal to zero. Next, one defines the set of collocation points, the so-called Greville abscissae,
\[
\tau_j = \frac{t_{j+1} + t_{j+2} + \dots + t_{j+\mu}}{\mu}, \quad j = 2, \dots, \nu + \mu -1.
\]
This choice is crucial for the stability of the discrete problem, see \cite{Aur_DaVeiga_Hughes_Reali_Sang_IGA} for more details.
The solution $\mathbf{u}_{\cal W} \in {\cal W}$ of the interpolation problem
$$
 -u_{\cal W}''(\tau_j) = h(\tau_j), \quad j = 2, \dots, \nu + \mu -1,
$$
written in the Bspline basis of ${\cal W}$ leads to
\[
 A_n = \left [ \, - \left ( B_{\ell+1}^{[\mu]} \right )'' (\tau_{j+1}) \, \right ]_{j, \ell = 1}^{n} \in \CC^{n \times n}.
\]
For $\mu \geq 2$, it is possible to split the above matrix into $A_n = T_n (f^{[\mu]}) + R_n^{[\mu]}$, where $T_n (f^{{\mu}})$ is a Toeplitz matrix with symbol
\begin{equation} \label{eq:symbol_iga}
f^{[\mu]} (x) =  (2 - 2 \cos x) h^{[\mu]} (x), \quad h^{[\mu]} (x) = \sum_{\alpha \in \ZZ} \left ( \frac{2 \sin (x/2) + \alpha \pi}{x + 2 \alpha \pi} \right )^{\mu - 1},
\end{equation}
$x \in [0,2\pi)$, and $R_n^{[\mu]}$ is a low rank correction term, see \cite{Don_Gar_Mann_Serra_Spel}. The symbols for the grid transfer operators are chosen as
in Example~\ref{sec:examples_biharmonic}. 
To define $\mathbf{b}_n$, we choose the exact solution
\[
 \mathbf{x} = (x_1, \ldots, x_n)^T, \quad x_j =\sin \left ( 5 \frac{\pi (j-1)}{n-1} \right ) + \sin \left ( n \frac{\pi (j-1)}{n-1} \right ), \quad j = 1, \dots , n,
\]
and set $\mathbf{b}_n = A_n \mathbf{x}$.

\begin{table} \label{table_3}
\centering
\footnotesize
\begin{tabular}{l@{\qquad\;\;}cc@{\qquad\;\;}cc@{\qquad\;\;}cc@{\qquad\;\;}c}
\toprule
{Subdivision} & \multicolumn{2}{c@{\qquad\;\;}}{$\mu = 3$}  & \multicolumn{2}{c@{\qquad\;\;}}{$\mu = 10$}  & \multicolumn{2}{c@{\qquad\;\;}}{$\mu = 16$} & {gen.} \\
{scheme} & {iter} & {conv. rate} & {iter} & {conv. rate} & {iter} & {conv. rate} & {deg.} \\
\toprule
$p_{1,0}$ (Linear Bspline) & 8  & 0.1111 & 16 & 0.3360 & 126 & 0.8798 & 1 \\
$p_{2,0}$ (Cubic Bspline) & 8  & 0.1111 & 13 & 0.2757 & 126 & 0.8799 & 3 \\
$p_{2,1}$ (Interp. 4 point) & 8  & 0.1111 & 13 & 0.2758 & 126 & 0.8799 & 3 \\
$p_{3,0}$ (Quintic Bspline) & 8  & 0.1111 & 13  & 0.2758 & 126 & 0.8798 & 5\\
$p_{3,1}$ & 8  & 0.1111 & 13  & 0.2759 & 126 &  0.8798 & 5 \\
$p_{3,2}$ (Interp. 6 point) & 8  & 0.1111 & 13 & 0.2759 & 126 & 0.8798 & 5 \\
\bottomrule
\end{tabular}
\caption{Binary subdivision schemes for isogeometric Laplacian problem}
\label{table:binary_res_iga}
\end{table}

\begin{table} \label{table_4}
\centering
\footnotesize
\begin{tabular}{l@{\qquad\;\;}cc@{\qquad\;\;}cc@{\qquad\;\;}cc@{\qquad\;\;}c}
\toprule
{Subdivision} & \multicolumn{2}{c@{\qquad\;\;}}{$\mu = 3$}  & \multicolumn{2}{c@{\qquad\;\;}}{$\mu = 10$}  & \multicolumn{2}{c@{\qquad\;\;}}{$\mu = 16$} & {gen.} \\
{scheme} & {iter} & {conv. rate} & {iter} & {conv. rate} & {iter} & {conv. rate} & {deg.} \\
\toprule
$\tilde{p}_{1,1}$ (Linear Bspline) & 31  & 0.5910 & 25 & 0.5247 & 48  & 0.7078 & 1 \\
$\tilde{p}_{2,1}$ (Quadratic Bspline) & 30 & 0.5847 & 19  & 0.4271 & 49  & 0.7124 & 2 \\
$\tilde{p}_{3,1}$ (Cubic Bspline) & 29  & 0.5739 & 16  & 0.3617 & 49  & 0.7120 & 3 \\
$\tilde{p}_{3,3}$ (Interp. 4-point) & 30 & 0.5853 & 17& 0.3731& 49  & 0.7118 & 3 \\
$\tilde{p}_{5,3}$  & 28 & 0.643 & 16 & 0.358 & 49  & 0.7137 & 5 \\
$\tilde{p}_{5,5}$ (Interp. 6-point) & 30 & 0.5831 & 16 &0.3523 & 49 & 0.7120 & 5 \\
\bottomrule
\end{tabular}
\caption{Ternary subdivision schemes for isogeometric Laplacian problem}
\label{table:ternary_res_iga}
\end{table}

Tables \ref{table:binary_res_iga} and \ref{table:ternary_res_iga} show how the number of iterations and convergence rates for the V-cycle change with 
increasing $\mu$ and fixed $n$. The starting dimension of the linear systems are  $n=2^{9}-1$ and $n=3^{6}-1$ in the binary and the ternary cases, respectively.
For small $\mu$, the results in Tables \ref{table:binary_res_iga} and \ref{table:ternary_res_iga} mimic the ones from Example \ref{sec:examples_biharmonic}.
Note that, in this case, even the grid transfer operators defined from the subdivision symbols $p_{1,0}$, $\tilde{p}_{1,1}$ and $\tilde{p}_{2,1}$ behave well,
as the order of $f^{[\mu]}$ at zero is $m=2$ in this case. Thus, Propositions \ref{p:binary_pseudo_vcycle} and \ref{p:ternary_pseudo_vcycle} are also applicable for these symbols.
However, when $\mu$ increases, the results in the binary and ternary cases differ. This is the case, since the symbol $f^{[\mu]}$ in \eqref{eq:symbol_iga} has 
a numerical zero at $\pi$ whose order increases when $\mu$ increases, see Figure \ref{f:symbol_iga}. In fact, by \cite{Don_Gar_Mann_Serra_Spel}, $h^{[\mu]} (\pi)$ in \eqref{eq:symbol_iga} 
converges to $0$ exponentially when $\mu$ goes to infinity. The symbols $p_{N,L}$ also vanish at $\pi$ for $N \geq 1$ and $L=0, \dots, N-1$, which is the source of further ill-conditioning. Note that the ternary symbols $\tilde{p}_{N,L}$ do not vanish at $\pi$ and, hence, lead to more stable methods for increasing $\mu$. 
On the contrary, for small $\mu$, the ternary symbols are not at all a good choice for the definition of a grid transfer operator (compare Tables 3 and 4). 

\begin{figure}
\centering
\includegraphics[scale=0.25,trim={1cm 0.5cm 1cm 1cm},clip]{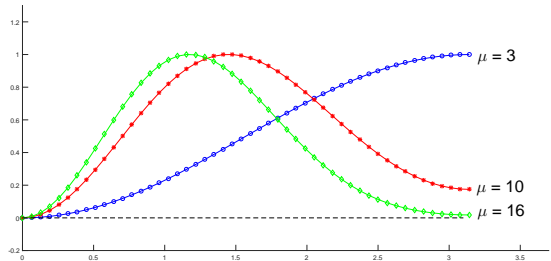}
\caption{Symbols $f^{[\mu]} / \norma{f^{[\mu]}}_{\infty}$ for $\mu \in \{3,10,16\}$ in $[0,\pi]$.}
\label{f:symbol_iga}
\end{figure}

\section{Conclusions}\label{sec:concl}
In this paper, we have shown that symbols of univariate subdivision schemes can be used to define powerful grid transfer operators in multigrid methods. Our analysis led to the definition of a whole class of new grid transfer operators.

In order to keep the presentation simple, in this paper, we discussed only one dimensional problems,
vertex centered discretizations and univariate primal subdivision schemes.  This allows for the first, transparent and straightforward exposition of the link between the symbol analysis for multigrid methods and symbols of subdivision schemes. Our results can be extended in many directions. In particular, the study of multivariate anisotropic problems  and multivariate subdivision schemes with general dilation matrices, or multigrid methods for linear
systems derived via cell centered discretizzations and dual subdivision schemes are of future interest.

\vspace{0.5cm} {Acknowledgments:} Maria Charina was sponsored by the Austrian Science Foundation (FWF) grant P28287-N35. Valentina Turati was sponsered by the OeAD' Austrian Office. 
This work was partially supported by Italian funds from MIUR-PRIN 2012 (grant 2012MTE38N).

\bibliographystyle{plain}
\bibliography{bibliography}

\end{document}